\documentclass[11pt,reqno]{amsart}
\usepackage[T1]{fontenc}
\usepackage[utf8]{inputenc}
\usepackage{amssymb,amsmath,amsthm}
\usepackage{enumitem}
\usepackage{tikz}
\usetikzlibrary{decorations.pathreplacing}
\usepackage[margin=1.2in]{geometry}
\usepackage[colorlinks=true,linkcolor=blue,citecolor=blue,urlcolor=blue]{hyperref}

\newtheorem{theorem}{Theorem}[section]
\newtheorem{proposition}[theorem]{Proposition}
\newtheorem{lemma}[theorem]{Lemma}
\newtheorem{corollary}[theorem]{Corollary}
\theoremstyle{definition}

\newtheorem{question}[theorem]{Question}
\theoremstyle{remark}
\newtheorem{remark}[theorem]{Remark}
\usepackage{orcidlink}   % in the preamble

\newcommand{\N}{\mathbb{N}}
\newcommand{\Z}{\mathbb{Z}}
\newcommand{\D}{\mathbb{D}}
\newcommand{\cJ}{\mathcal{J}}
\newcommand{\St}{\widetilde{S}}
\newcommand{\Homeo}{\operatorname{Homeo}}
\newcommand{\id}{\mathrm{id}}
\newcommand{\cs}{2^{\N}}
\newcommand{\xs}{x^{*}}
\newcommand{\zinf}{0^{\infty}}
\newcommand{\oinf}{1^{\infty}}

\begin{document}

\title{On the homeomorphism group of the Cantor set: monotone and measure preserving maps}
%\title{Decomposing the homeomorphism group of the Cantor set into monotone and measure preserving maps}

%\author{Benjamin Vejnar}
\author{Benjamin Vejnar\orcidlink{0000-0002-2833-5385}}
\address{Charles University, Prague, Czech Republic}

\subjclass[2020]{Primary 54H11, 37B05; Secondary 20F38, 22F50, 28D05, 54F05}
\keywords{Cantor set, homeomorphism group, order preserving homeomorphism, measure preserving homeomorphism, good measure, special clopen values set, topological generators, Thompson group, generic homeomorphism}

\begin{abstract}
Let $E$ be the Cantor set, $G=\Homeo(E)$, let $P\le G$ be the group of order
preserving homeomorphisms and $L\le G$ the group of homeomorphisms preserving the
Cantor--Lebesgue measure $\lambda$. We study how $G$ is built from these two subgroups.
In 2003, Glasner and Weiss stated that $G=LP=PL$. We show that this is not the case, in fact $PL$ and $LP$ are two different proper subsets of $G$.
On the other hand, $PL$ is a dense $G_\delta$ subset of $G$, the
multiplication map $P\times L\to PL$ is a homeomorphism, and $G=PLP=LPL$. In particular, every homeomorphism of $E$ is conjugate to an element of $PL$. We classify the double cosets $PgL$ using special
clopen values set. There are continuum many of them, and $PL$ is the only one which is not meager.
We prove that $G$ is topologically generated by one element of $P$ and one element of $L$ and these two generators produce the Thompson's group $V$. We also show that the generic homeomorphism of $E$ is the product of a generic element of $P$ and a generic element of $L$.
The equality $G=PL$ fails also if $\lambda$ is replaced by any other full nonatomic measure.
\end{abstract}
\renewcommand{\shorttitle}{On the homeomorphism group of the Cantor set}
\maketitle

\section{Introduction}

Let $E\subset[0,1]$ be the Cantor ternary set and let $G=\Homeo(E)$ be the Polish group of
all homeomorphisms of $E$ with the topology of uniform convergence. This group is one of the most
thoroughly studied ``large'' Polish groups, and it has a number of remarkable properties.
As an abstract group, $G$ is simple \cite{An}, all its automorphisms are inner by Rubin's reconstruction theorem \cite{Ru}, and
$G$ is not the union of a countable chain of proper subgroups \cite{DG1}.
%In fact, $G$ has \emph{uncountable strong cofinality} \cite{DG2}: whenever $G$ is the union of an increasing sequence $W_0\subseteq W_1\subseteq\cdots$ of subsets, there are $n$ and $k$ such that $W_n^k=G$, that is, every element of $G$ is a product of $k$ elements of $W_n$.

As a topological group, $G$ is isomorphic to the automorphism group of the countable atomless Boolean algebra $\mathcal B$ of all clopen subsets of $E$, with the topology of pointwise convergence on $\mathcal B$.
Hence $G$ is \emph{non-archimedean} (the open subgroups form a base of neighbourhoods of the identity), and $G$ is \emph{oligomorphic} (for every $n\ge1$ the action of $G$ on $\mathcal B^n$ has only finitely many orbits).
It follows that $G$ is \emph{Roelcke precompact} \cite{Ts} (for every neighbourhood $U$ of the identity there is a finite set $F\subseteq G$ with $G=UFU$), and that $G$ has \emph{Kazhdan's property} (T) \cite{ET}:
there are a compact set $Q\subseteq G$ and $\varepsilon>0$ such that every unitary representation $\pi$ of $G$ which has a unit vector $\xi$ with $\|\pi(g)\xi-\xi\|<\varepsilon$ for all $g\in Q$ has a nonzero invariant vector.

The group $G$ has a dense conjugacy class \cite{GW01}, and even a comeager one \cite{KR}; an explicit description of the generic homeomorphism was given in \cite{AGW} and later in \cite{BernardesDarji, KupkaOprocha}. Moreover, $G$ is topologically $2$-generated \cite[Theorem 2.10]{KR} and it has \emph{ample generics} \cite{Kw} (for every $n\ge1$ the action of $G$ on $G^n$ by simultaneous conjugation of all coordinates has a comeager orbit). Consequently \cite{KR}, $G$ has the \emph{small index property}, (every subgroup of $G$ of index less than $2^{\aleph_0}$ is open),
and every homomorphism of $G$ into a separable topological group is continuous \cite{RS}.

From the dynamical point of view, $G$ is not amenable, since there is no $G$-invariant Borel probability measure on $E$.
In their influential paper \cite{GW} Glasner and Weiss proved that the universal minimal
flow of $G$ is the natural action of $G$ on Uspenskij's space $\Phi(E)$ of maximal chains
of closed subsets of $E$ (see also \cite{KPT, GG}). Section~2 of \cite{GW} contains several results on the structure
of the group $G$ which, as the authors point out, are not needed for the proof of the main result and
are included for completeness. The first of them concerns a decomposition of $G$ into two subgroups which are of independent interest.

Let $P\le G$ be the group of all order preserving homeomorphisms of $E$, and let $L\le G$ be the group of all homeomorphisms of $E$ which preserve the Cantor--Lebesgue measure $\lambda$ on $E$.
The group $P$ is isomorphic to the automorphism group of the ordered set of rational numbers \cite[Theorem 1.2]{GW} and thus it is extremely amenable \cite{Pe} and it has a comeager conjugacy class \cite{Tr}.
The group $L$ has ample generics \cite[Theorem 1.12]{KR}.
Theorem 2.1 of \cite{GW} asserts that $L$ and $P$ are closed subgroups of $G$, that $L\cap P=\{\id\}$, and that
\[
G=LP=PL.
\]
The statements about closedness and about the intersection are correct. However, the
proof of the equality $G=LP=PL$ given in \cite{GW} contains a gap, and the equality
itself turns out to be false. The aim of this paper is to describe what is true instead and, more generally, to study how the group $G$ is built from its subgroups $P$ and $L$.

\begin{theorem}\label{thm:main}
Let $G=\Homeo(E)$ and let $P$ and $L$ be as above.
\begin{enumerate}[label=\textup{(\arabic*)}]
\item\label{main1} $P$ and $L$ are closed subgroups of $G$ and $P\cap L=\{\id\}$. Consequently, every
$g\in PL$ can be written as $g=pl$ with $p\in P$, $l\in L$ in a unique way.
\item\label{main2} $PL\neq LP$. In particular, neither $PL$ nor $LP$ is a subgroup of $G$, and
$PL\neq G\neq LP$.
\item\label{main3} $PL$ and $LP$ are dense $G_\delta$ subsets of $G$, and the multiplication
maps $P\times L\to PL$ and $L\times P\to LP$ are homeomorphisms.
\item\label{main4} $G=PLP=LPL$.
\end{enumerate}
\end{theorem}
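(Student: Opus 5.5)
The whole theorem rests on one reformulation of membership in $PL$. We have $g=pl$ with $p\in P$, $l\in L$ iff $p^{-1}g\in L$, i.e.\ iff $g_*\lambda=p_*\lambda$. So $g\in PL$ iff the measure $\mu=g_*\lambda$ lies in the $P$-orbit of $\lambda$.

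\textbf{Characterizing the $P$-orbit of $\lambda$.} I will describe $\{p_*\lambda : p\in P\}$ through distribution functions. Call a clopen set $A\subseteq E$ an \emph{initial clopen segment} if $A=[0,a]\cap E$ with $a$ a left endpoint of a gap of $E$. For $\lambda$, the values $\lambda(A)$ on initial clopen segments are exactly the dyadic rationals in $(0,1)$, and the distribution function is injective except at gap endpoints. The claim is: a full nonatomic measure $\mu$ equals $p_*\lambda$ for some $p\in P$ iff $\{\mu(A): A \text{ an initial clopen segment}\}=\Z[1/2]\cap(0,1)$. For the converse direction, $p$ is built by matching initial clopen segments of equal measure; this is an order isomorphism between the ordered sets of gaps, and it extends to an order-preserving homeomorphism.

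For $\mu=g_*\lambda$, every value $\mu(A)=\lambda(g^{-1}A)$ is automatically dyadic. Hence
\[
g\in PL \iff \text{for every dyadic } d\in(0,1) \text{ there is an initial clopen segment } A \text{ with } \lambda(g^{-1}A)=d.
\]

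\textbf{Parts \ref{main1}--\ref{main3}.} Closedness of $P$ and $L$ is routine. $P\cap L=\{\id\}$ holds because an order-preserving map fixing $\lambda$ preserves the distribution function and hence is the identity. Uniqueness of the decomposition $g=pl$ follows.

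Since $LP=(PL)^{-1}$, proving $PL\neq LP$ amounts to producing $g\in PL$ with $g^{-1}\notin PL$. I will build $g$ as a limit of permutations of finer and finer dyadic clopen partitions, arranged so that:
\begin{itemize}[label=--]
\item no initial clopen segment $A$ satisfies $\lambda(gA)=1/2$;
\item every dyadic value is still attained by $\lambda(g^{-1}A)$ at some finite stage.
\end{itemize}
Once $PL\neq LP$ is known, neither set is a subgroup: if $PL$ were a subgroup it would be closed under inverses, giving $PL=LP$. The same argument shows both are different from $G$.

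For the $G_\delta$ claim, each condition ``$\exists A$ with $\lambda(g^{-1}A)=d$'' defines an open set of $g$, because $g^{-1}A$ is clopen and depends locally constantly on $g$. Density of each such open set follows by perturbing $g$ on a small clopen piece so that a prescribed initial segment gets measure $d$. Baire's theorem then makes $PL$ a dense $G_\delta$, and $LP=(PL)^{-1}$ inherits this.

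For the homeomorphism claim, the inverse map $g\mapsto(p,l)$ is continuous: $p$ is recovered from the finitely many initial-segment values of $g_*\lambda$ visible at each finite level, which depend continuously on $g$. Then $l=p^{-1}g$ is continuous as well.

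\textbf{Part \ref{main4}, the main obstacle.} Given $g$, I need $q\in P$ with $gq\in PL$, which gives $g\in PLq^{-1}\subseteq PLP$. Then $LPL=(PLP)^{-1}=G$ follows.

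The condition on $q$ is that the measure $g_*(q_*\lambda)$ attains every dyadic value on initial clopen segments. Here $q_*\lambda$ ranges over all measures $\nu$ satisfying the characterization above. So I must build such a $\nu$ so that $g_*\nu$ also has this property. I plan to construct $\nu$ by a back-and-forth enumeration of dyadics, alternating two kinds of steps:
\begin{itemize}[label=--]
\item \emph{forth steps:} assign $\nu$-values to initial segments so that the required dyadics appear for $\nu$;
\item \emph{back steps:} assign values to the sets $g^{-1}A$ for initial segments $A$, so that the required dyadics appear for $g_*\nu$.
\end{itemize}
At each stage, finitely many clopen sets with compatible additive dyadic values will be fixed. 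The key technical point is that, because $E$ has no isolated points, any finite consistent dyadic assignment on a clopen partition can be refined to realize any further dyadic value on a new clopen set.

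An alternative route I would try first is a Baire category argument: show that $\{q\in P: gq\in PL\}$ is comeager in $P$, reusing the density argument from part \ref{main3} inside $P$.
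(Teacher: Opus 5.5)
\textbf{Part (2) has a genuine gap.} Your parts (1) and (3) are essentially the paper's argument: the open sets $\{g:\lambda(g^{-1}J)=d\text{ for some }J\in\cJ\}$, density by a local perturbation, and continuity of $g\mapsto p_g$ from injectivity of $\lambda$ on $\cJ$.

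Part (2) is the statement that refutes Glasner--Weiss, and you never say how to ``arrange'' that $\lambda(gA)\neq\tfrac12$ for every $A\in\cJ$. This cannot be secured stage by stage. The set of $g$ with $\lambda(gA)=\tfrac12$ for some $A\in\cJ$ is the image under inversion of your open dense set for $d=\tfrac12$ from part (3). So every neighbourhood fixed at a finite stage contains maps violating the requirement, and only an exact computation of the limit can certify it. (If your finite stages were $\lambda$-preserving permutations of cylinders, the limit would even lie in the closed group $L\subseteq PL\cap LP$.)

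The missing idea comes from the distribution function. $\tfrac12\notin\St((g^{-1})_*\lambda)$ holds exactly when $\lambda(g(E_x))=\tfrac12$ for some $x$ that is not an endpoint of a gap. So $g$ must carry a closed but non-open initial segment onto a set of measure exactly $\tfrac12$, gluing infinitely many pieces together at $x$.

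The paper does this explicitly. Let $\xs=0101\ldots$, let $u_n$ agree with $\xs$ before place $n$ and differ from it at place $n$, and let $v_{2j-1}=0^{j-1}11$ and $v_{2j}=0^{j-1}10$. Define $g_0(v_ny)=u_ny$ and $g_0(\zinf)=\xs$. Then $g_0^{-1}(E_{\xs})=\{\zinf\}\cup\bigcup_j[0^{j-1}10]$ has measure $\tfrac12$. A local lemma finishes the computation: on a cylinder $[u]$ where the measure is a constant multiple of $\lambda$ and gives $[u]$ mass a power of $2$, the initial segments ending in $[u]$ realize exactly the dyadic rationals of the corresponding interval. This yields $\St((g_0)_*\lambda)=\D\setminus\{\tfrac12\}$ and $\St((g_0^{-1})_*\lambda)=\D$, so $g=g_0^{-1}$ is precisely the element your two bullets call for. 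Without such a construction and its verification, (2), and hence $PL\neq G\neq LP$, is unproved.

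\textbf{Part (4) takes a different and much heavier route.} Your target, some $q\in P$ with $gq\in PL$, follows in one line from part (3). $PL$ and $g\cdot PL$ are both comeager in $G$, so they meet: $gpl=p'l'$. Hence $gp=p'l'l^{-1}\in PL$, so $g\in PLP$, and $LPL=(PLP)^{-1}=G$.

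Your back-and-forth construction of $\nu$, or comeagerness of $\{q\in P:gq\in PL\}$ in $P$, can also be made to work, but neither is a literal reuse of earlier steps. Your ``key technical point'' speaks of realizing a value on an arbitrary new clopen set. What you actually need is a prescribed value on an initial segment, or on $g^{-1}$ of one, while keeping every piece of the current partition positive; this forces a careful choice of the cut. Likewise, density inside $P$ requires perturbing the measure $q_*\lambda$ within a single level-$n$ cylinder, not perturbing $g$ by an arbitrary homeomorphism as in part (3).
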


Thus a generic homeomorphism of the Cantor set is, in a unique way, a composition of a measure
preserving homeomorphism and an order preserving one (in either order), but not every
homeomorphism is. Since $G=PLP=LPL$, every homeomorphism of $E$ is conjugate to an element of $PL$ (Corollary \ref{cor:conj}). 
%The situation is reminiscent of the Gauss decomposition in $\mathrm{GL}_n$ \cite[Section 3.5]{HJ}, where the products of a lower and an upper triangular matrix fill only a dense open set; the double cosets $PgL$ considered below play the role of the cells of the Bruhat decomposition \cite[Section 28]{Hu}.

The membership in $PL$ is governed by an invariant of measures on ordered Cantor spaces introduced by Akin \cite{Ak1}. A \emph{gap} of $E$ is a pair $(s,t)$ of points of $E$ such that $s<t$ and no point of $E$ lies strictly between $s$ and $t$. Let $\cJ$ be the countable family of all nonempty proper clopen
initial segments of $E$, that is, the sets $E\cap[0,s]$, where $s$ is the left endpoint of a gap.
For a Borel probability measure $\mu$ on $E$, let
\[
\St(\mu)=\{\mu(J):J\in\cJ\}
\]
be its \emph{special clopen values set}. Let $\D$ denote the set of dyadic rationals in $(0,1)$,
so that $\St(\lambda)=\D$. For $g\in G$ we write $g_*\lambda$ for the image measure, $g_*\lambda(B)=\lambda(g^{-1}B)$.
We always have $\St(g_*\lambda)\subseteq\D$, and (Corollary \ref{cor:criterion})
\[
g\in PL\quad\Longleftrightarrow\quad \St(g_*\lambda)=\D .
\]
The gap in \cite{GW} consists in taking the inclusion $\D\subseteq\St(g_*\lambda)$ for granted.
In Section \ref{sec:example} we describe a simple $g\in G$ with $\St(g_*\lambda)=\D\setminus\{\frac12\}$.

Since $\St((pgl)_*\lambda)=\St(g_*\lambda)$ for $p\in P$ and $l\in L$, the set $\St(g_*\lambda)$ depends only on the double coset $PgL$.
Our second result describes the space of double cosets. For $S\subseteq(0,1)$ and $a<b$ in $S\cup\{0,1\}$ let
$\Sigma_S(a,b)$ be the set of all sums $\sum_{i=1}^k(b_i-a_i)$, where $k\ge1$ and
$a\le a_1<b_1\le a_2<b_2\le\dots\le a_k<b_k\le b$ are elements of $S\cup\{0,1\}$. We say that
$S$ is \emph{admissible} if $S\subseteq\D$, $S$ is dense in $(0,1)$, and
\begin{equation}\label{eq:adm}
\D\cap(0,b-a)\subseteq\Sigma_S(a,b)\qquad\text{for all $a<b$ in $S\cup\{0,1\}$.}
\end{equation}

\begin{theorem}\label{thm:cosets}
The assignment $PgL\mapsto\St(g_*\lambda)$ is a bijection between the set $\{PgL:g\in G\}$ of
double cosets and the family of all admissible subsets of $\D$. The double coset $PL$ corresponds to
$\D$. If $R\subseteq\D$ has no accumulation point in $(0,1)$, then $\D\setminus R$ is admissible.
Consequently, there are $2^{\aleph_0}$ double cosets, and all of them except $PL$ are meager in $G$.
\end{theorem}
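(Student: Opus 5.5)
The plan is to reduce everything to the classification of full nonatomic measures on the ordered Cantor set $E$ up to order-isomorphism. The double coset $PgL$ is determined by the orbit of $g_*\lambda$ under $P$: indeed $PgL=Pg'L$ iff $g_*\lambda=p_*g'_*\lambda$ for some $p\in P$. For the nontrivial direction, if $g_*\lambda=p_*g'_*\lambda$ then $l=(pg')^{-1}g$ preserves $\lambda$, so $g=pg'l$. So the double cosets correspond bijectively to the $P$-orbits of the measures $\mu=g_*\lambda$. These $\mu$ range over exactly the full nonatomic measures with clopen values set $\D$, since all such measures are homeomorphic images of $\lambda$ by the Glasner--Weiss/Akin characterization of measures homeomorphic to $\lambda$. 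I would then invoke Akin's theorem \cite{Ak1} on ordered Cantor spaces: two full nonatomic measures $\mu,\nu$ on $E$ are related by an order-preserving homeomorphism iff $\St(\mu)=\St(\nu)$. The order-preserving map is essentially forced, $x\mapsto F_\nu^{-1}F_\mu(x)$ with $F$ the distribution functions, and it is a homeomorphism precisely when the gaps match. This gives injectivity of $PgL\mapsto\St(g_*\lambda)$. Since $\St(\lambda)=\D$, the coset $PL$ corresponds to $\D$.

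For the range, I would show two things.

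\emph{Necessity.} Let $S=\St(\mu)$ with $\mu=g_*\lambda$. Then $S\subseteq\D$, and $S$ is dense because $\mu$ is full and nonatomic. For admissibility, take $a<b$ in $S\cup\{0,1\}$, realized by the interval $I=F_\mu^{-1}(a,b]$, which is clopen. Every clopen subset $C\subseteq I$ is a finite disjoint union of clopen intervals with endpoints at gaps, so $\mu(C)\in\Sigma_S(a,b)$. Since $\mu$ is homeomorphic to $\lambda$, $\mu$ restricted to $I$ takes on clopen subsets every dyadic value in $(0,b-a)$, by the good-measure property of Akin. Hence \eqref{eq:adm} holds.

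\emph{Sufficiency.} Given an admissible $S$, build the ordered Cantor space $X_S$ by doubling $[0,1]$ at the points of $S$, with $\mu_S$ the pushforward of Lebesgue measure. By Akin's theorem this is order-isomorphic to $E$ with a full nonatomic measure $\mu$ satisfying $\St(\mu)=S$. It remains to check that $\mu$ is \emph{good}: for clopen $U$ and clopen $V$ with $\mu(U)<\mu(V)$, some clopen $W\subseteq V$ has $\mu(W)=\mu(U)$. Then the clopen values set $\D$ together with goodness gives $\mu\cong\lambda$ by Akin's classification of good measures. Goodness should follow from \eqref{eq:adm}. Write $V$ as a finite disjoint union of intervals $(a_j,b_j]$. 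Split the dyadic target $\mu(U)$ into dyadic pieces, each fitting inside one interval, and realize each piece using \eqref{eq:adm}. Finally, the clopen values set equals $\D$: all values lie in $\D$ since $S\subseteq\D$, and every dyadic is attained by \eqref{eq:adm} with $a=0,b=1$.

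For $R$ with no accumulation point in $(0,1)$, I would verify \eqref{eq:adm} for $S=\D\setminus R$ directly. Given $a<b$ and a target $d\in\D\cap(0,b-a)$, choose a fine dyadic grid of mesh $2^{-n}$ with $2^{-n}$ smaller than the distances between the finitely many points of $R$ near the relevant region. Then pick an interval $[a_1,a_1+d]$, or a union of two pieces, whose endpoints avoid $R$. The delicate part is near $0$ and $1$, where $R$ may accumulate. There I would take intervals with both endpoints in a compact middle region; this is possible since $d<b-a$ leaves room to slide. Density of $S$ is clear.

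For the cardinality, distinct discrete sets $R$, for example subsets of $\{1-2^{-n}\}$, give $2^{\aleph_0}$ distinct admissible sets. Meagerness of the other double cosets follows from Theorem \ref{thm:main}\ref{main3}: $PL$ is comeager and the double cosets are disjoint. I expect the main obstacle to be the goodness verification in the sufficiency step, specifically decomposing an arbitrary clopen target into pieces each realizable by \eqref{eq:adm}.
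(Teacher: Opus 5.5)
Your proposal is correct and follows the paper's route: Akin's order classification gives injectivity, goodness of $g_*\lambda$ gives necessity, and for sufficiency you build a measure with prescribed $\St$, prove goodness by greedy splitting, and apply Akin's classification of good measures; the $\D\setminus R$ case is an avoidance argument, where your single interval $[a_1,a_1+d]$ slid inside a compact part of $(a,b)$ works as well as the paper's two intervals anchored at $a$ and $b$. Two slips to fix. First, your aside that every full nonatomic measure with clopen values set $\D\cup\{0,1\}$ is a homeomorphic image of $\lambda$ is false, since such a measure need not be good; this is exactly why \eqref{eq:adm} is imposed for all $a<b$ and not only for $a=0,b=1$. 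Second, $F_\mu^{-1}((a,b])$ is not clopen when $a$ or $b$ lies in $S$; the right set is $J_b\setminus J_a$. Neither slip is load-bearing: your sufficiency step checks goodness explicitly, and necessity goes through verbatim with $J_b\setminus J_a$.
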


As mentioned above, $G$ is topologically $2$-generated \cite[Theorem 2.10]{KR}. Our third result shows that the two generators can be taken from $P$ and $L$.

\begin{theorem}\label{thm:gen}
Let $p\in P$ and $l\in L$ be defined by
\begin{gather*}
p(0y)=00y,\qquad p(10y)=01y,\qquad p(11y)=1y,\\
l(00y)=01y,\qquad l(01y)=11y,\qquad l(11y)=10y,\qquad l(10y)=00y
\end{gather*}
for $y\in\{0,1\}^{\N}$ (see Figure \ref{fig:pl}). Then the subgroup of $G$ generated by $p$ and $l$ is dense in $G$. In fact, this subgroup is isomorphic to the Thompson's group $V$.
\end{theorem}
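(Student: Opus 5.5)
The plan is to identify $\langle p,l\rangle$ with Thompson's group $V$, viewed as the group of homeomorphisms of $\cs\cong E$ given by finite prefix-replacement rules $u_iy\mapsto v_iy$, where $\{u_i\}$ and $\{v_i\}$ are two complete finite prefix codes (equivalently, pairs of finite binary trees with the same number of leaves, together with a bijection of the leaves). Density will then be a standard fact. A homeomorphism $g$ of $E$ is determined up to a small uniform error by its action on a fine clopen partition. Concretely, for any $n$ there is $m$ with $g(\{0,1\}$-cylinders of length $m)$ refining the length-$n$ partition. Hence $g$ is approximated by an element of $V$ that maps each length-$m$ cylinder $[w]$ onto a cylinder contained in $g([w])$'s length-$n$ block. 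More precisely, $g$ induces a bijection between two finite clopen partitions, each of which can be refined into cylinders; matching the numbers of cylinders inside corresponding pieces gives an element of $V$ uniformly $2^{-n}$-close to $g$. So $V$ is dense in $G$, and it remains to show $\langle p,l\rangle=V$.

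First, $p$ and $l$ are prefix-replacement maps, so $\langle p,l\rangle\le V$. Here $p$ is the element $x_0$-type generator of Thompson's $F$ (order preserving on $\cs$ with the lexicographic order, matching the order on $E$). The map $l$ permutes the four cylinders of length $2$ by the $4$-cycle $00\to01\to11\to10\to00$, so it lies in $V$ and preserves $\lambda$.

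For the reverse inclusion I would use a known generating set of $V$. Cannon--Floyd--Parry give $V=\langle A,B,C,\pi_0\rangle$, where $F=\langle A,B\rangle$, $C$ is a cyclic rotation of order $3$, and $\pi_0$ is a transposition. It is also known that $V$ is $2$-generated. The strategy is to produce from $p$ and $l$ enough elements to recover those generators. Step one: from $l$, obtain $l^2$, which swaps $00\leftrightarrow 11$ and $01\leftrightarrow10$, and conjugates $plp^{-1}$ and $p^{-1}lp$, which act on cylinders of different depths. Step two: find a second element of $F$, namely $B=x_1$ (which acts as $x_0$ on the cylinder $1\cdot$), by a conjugation $B=hph^{-1}$ with $h$ built from $l$. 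For instance, some power of $l$ moves $[1]$ near where $x_0$ acts, and one adjusts with elements of $F$ already available. With $A=p$ and $B$ in hand we have all of $F$. Step three: $F$ together with $l$ contains a transposition of two disjoint cylinders. For example, $l^2$ composed with suitable elements of $F$ gives the swap $0y\leftrightarrow1y$, and then $F$-conjugates of $l$ yield the order-$3$ element $C$ and the transposition $\pi_0$. Finally, since $F$ acts transitively on pairs of standard dyadic intervals and the transpositions of cylinders together with $F$ generate $V$, we get $\langle p,l\rangle=V$.

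The main obstacle is step two: exhibiting explicitly a word in $p,l$ equal to $x_1$ (or any element of $F$ not in $\langle p\rangle$). My first attempt would be to compute the tree-pair diagrams of $lpl^{-1}$, $l^{-1}pl$, and $l^2pl^{-2}$. I expect one of them, after multiplying by a power of $p$, to be supported in a single cylinder and equal there to a rescaled copy of $p$, which is $x_1$ up to conjugation by $p$. The remaining verifications in steps three and four are routine tree-pair computations.
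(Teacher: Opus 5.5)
\textbf{There is a genuine gap: you never prove $\langle p,l\rangle=V$, and the concrete steps you sketch would fail.} Your density argument and the inclusion $\langle p,l\rangle\le V$ are fine and match the paper (Lemma~\ref{lem:Vdense}). But the reverse inclusion is the entire content of the theorem, and you produce no words in $p$ and $l$ equal to generators of $V$. The specific steps fail as follows:
\begin{itemize}
\item The element $x_1$ is not of the form $hph^{-1}$ for any $h\in G$. Indeed, $p$ has exactly two fixed points, $\zinf$ and $\oinf$, while $x_1$ fixes the whole cylinder $[0]$.
\item None of $lpl^{-1}$, $l^{-1}pl$, $l^2pl^{-2}$ is order preserving. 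For example, $lpl^{-1}$ maps $[00]$ onto $[11]$ and $[01]$ onto $[010]$, and $l^2pl^{-2}$ maps $[000]$ onto $[01]$ and $[001]$ onto $[00]$. So no product of one of them with powers of $p$ lies in $P$. An element supported in a single cylinder and acting there as a rescaled copy of $p$ does lie in $F\subseteq P$, so your expected outcome cannot occur.
\item $l^2=(00\ 11)(01\ 10)$ reverses the order of the four cylinders of length $2$. Hence $f_1l^2f_2$ with $f_1,f_2\in F$ is never order preserving on both a proper clopen initial segment and its complement. So it is never the swap $\varrho\colon 0y\leftrightarrow 1y$.
\item Conjugates of $l$ have order $4$, so none of them is the order-$3$ element $C$.
\end{itemize}

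\textbf{The missing idea is a way to extract small permutations from $p$ and $l$.} The paper uses $a=pl^2p$, which lies in $L$:
\[
a(0y)=1y,\qquad a(10y)=01y,\qquad a(110y)=001y,\qquad a(111y)=000y.
\]
Let $s_u=(u0\ u1)$ be the swap of the two halves of $[u]$. Then $a=\alpha s_{11}$ and $la=\beta s_{11}$, where $\alpha=(00\ 10\ 01\ 11)$ and $\beta=(01\ 10\ 11)$. Using $\sigma s_u\sigma^{-1}=s_{\sigma(u)}$, one computes $a^4=s_{00}s_{01}s_{10}s_{11}$ and $(la)^3=s_{01}s_{10}s_{11}$. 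Hence $s_{00}=(la)^3a^4\in\Gamma$.

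Conjugating by powers of $l$ gives every $s_u$, hence $\alpha=as_{11}\in\Gamma$. The elements $l$ and $\alpha$ generate the full symmetric group on words of length $2$. Therefore the Bleak--Quick generators $(00\ 01)$, $(01\ 10\ 11)$, $(1\ 00)$ of $V$ all lie in $\Gamma$, the last because $(1\ 00)=\varrho\,p^{-1}(00\ 01)\,p\,\varrho$.

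Some explicit computation of this kind is indispensable. Without it, your proposal only establishes $\langle p,l\rangle\le V$ and the density of $V$.
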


Here we used the identification of $E$ with $\{0,1\}^{\N}$ described in Section \ref{sec:prelim}. More details on the Thompson group can be found in Section \ref{sec:gen}. It has been known that the Thompson's group $V$ is generated by two elements \cite{BQ}.
%and generating sets consisting of elements of finite order are found in \cite{BEH} and \cite{SSW}.
The new information in Theorem \ref{thm:gen} is that one generator can be taken in $P$ and the other one in $L$, and that both can be chosen to be very simple.

\begin{figure}[ht]
\centering
\begin{tikzpicture}[x=0.92cm,y=0.92cm,line width=0.4pt]
% positions of the four quarters
\def\qa{0}\def\qb{1.2}\def\qc{2.6}\def\qd{3.8}
\newcommand{\panel}[1]{%
  % the Cantor set along both axes, represented by four intervals
  \foreach \x/\w in {\qa/00,\qb/01,\qc/10,\qd/11}{
    \draw (\x,-0.3)--++(1,0);
    \node[font=\scriptsize,below] at (\x+0.5,-0.33) {$\w$};
    \draw (-0.3,\x)--++(0,1);
    \node[font=\scriptsize,left] at (-0.33,\x+0.5) {$\w$};
  }
  \node at (2.4,-1.25) {#1};
}
% ---------- p ----------
\begin{scope}
\panel{$p$}
\draw (\qa,\qa)--++(1,0.5);
\draw (\qb,\qa+0.5)--++(1,0.5);
\draw (\qc,\qb)--++(1,1);
\draw (\qd,\qc)--++(0.5,1);
\draw (\qd+0.5,\qd)--++(0.5,1);
\end{scope}
% ---------- l ----------
\begin{scope}[xshift=6.6cm]
\panel{$l$}
\draw (\qa,\qb)--++(1,1);
\draw (\qb,\qd)--++(1,1);
\draw (\qd,\qc)--++(1,1);
\draw (\qc,\qa)--++(1,1);
\end{scope}
\end{tikzpicture}
\caption{The graphs of the generators $p$ and $l$ from Theorem \ref{thm:gen}.}\label{fig:pl}
\end{figure}
%The set of all pairs in $P\times L$ which generate a dense subgroup is not dense in $P\times L$ (Remark \ref{rem:gen}), so the existence of such a pair does not follow from a routine genericity argument.

In Section \ref{sec:remarks} we show that the generic homeomorphism of $E$ is the product of a generic element of $P$ and a generic element of $L$, although not every such product is generic (Proposition \ref{prop:generic}).
We also observe that the equality $G=PL$ fails if $\lambda$ is replaced by an arbitrary full nonatomic measure on $E$ (Remark \ref{rem:anymeasure}). We do not know whether $P$ or $L$ has a closed complement in $G$ (Question \ref{q:complement}).

The paper is organized as follows. Section \ref{sec:prelim} contains the preliminaries, in particular
Akin's classification of measures on an ordered Cantor space up to order isomorphism and the
resulting criterion for membership in $PL$. Section \ref{sec:example} contains an example of a homeomorphism which is not in $PL$.
In Section \ref{sec:bigcell} we prove Theorem \ref{thm:main}. In Section \ref{sec:GW} we explain what
remains true of the other statements of \cite[Section 2]{GW} which depend on \cite[Theorem 2.1]{GW}.
Theorem \ref{thm:cosets} is proved in Section \ref{sec:cosets} and Theorem \ref{thm:gen} in Section \ref{sec:gen}. Section \ref{sec:remarks} contains further results and questions.

Let us stress that the main theorem of \cite{GW} is independent of
\cite[Theorem 2.1]{GW}. We also note that another inaccuracy in \cite{GW}, at the end of
Section 3 there, was pointed out and repaired by Glasner and Gutman in \cite{GG}.

\section{Preliminaries}\label{sec:prelim}

\subsection{Coding}
We identify $E$ with $\cs=\{0,1\}^{\N}$ by means of the map
$x\mapsto\sum_{i\ge1}2x_i3^{-i}$. This map is a homeomorphism, it is an isomorphism of the
lexicographic order of $\cs$ onto the order of $E$ inherited from the real line, and it carries
the product of the measures $(\frac12,\frac12)$ to the Cantor--Lebesgue measure $\lambda$.
For a finite word $w\in\{0,1\}^n$ we write $|w|=n$ and $[w]=\{x\in E: x\text{ extends }w\}$.
The sets $[w]$ with $|w|=n$ are precisely the basic intervals $I^n_j$, $1\le j\le 2^n$, of \cite{GW};
they are clopen, convex, pairwise disjoint, they have diameter $3^{-n}$ and $\lambda([w])=2^{-n}$.
For a word $w$ and $y\in\cs$ (or a word $y$), $wy$ denotes the concatenation, and $\zinf$, $\oinf$ are the constant
sequences. The least and the greatest elements of $E$ are $0=\zinf$ and $1=\oinf$.

The gaps of $E$ are the pairs $(w0\oinf,w1\zinf)$, where $w$ is a finite word. We denote by
$E^-=\{w0\oinf\}$ the set of left endpoints and by $E^+=\{w1\zinf\}$ the set of right endpoints of
gaps. These sets are disjoint and both are dense in $E$. For $t\in E$ we put $E_t=\{x\in E:x\le t\}$. Then
\[
\cJ=\{E_t:t\in E^-\}
\]
is the family of all nonempty proper clopen initial segments of $E$, and we let
$\cJ_0=\cJ\cup\{\emptyset,E\}$. For every $n$, the unions of the first $i$ of the sets $[w]$, $|w|=n$ (in the increasing order),
$0\le i\le 2^n$, belong to $\cJ_0$, and every element of $\cJ_0$ is of this form for some $n$. Consequently:

\begin{enumerate}[label=\textup{(E\arabic*)}]
\item\label{clopA} $\lambda$ restricted to $\cJ$ is a strictly increasing bijection of $(\cJ,\subseteq)$ onto $\D$, the set of dyadic
rationals in $(0,1)$. In particular $\St(\lambda)=\D$.
\item\label{clopB} Every nonempty clopen set $W\subseteq E$ can be written as a disjoint union
$W=\bigcup_{i=1}^k (J_i'\setminus J_i)$ with $J_1\subsetneq J_1'\subseteq J_2\subsetneq J_2'\subseteq\dots\subseteq J_k\subsetneq J_k'$ in $\cJ_0$.
\item\label{clopC} If $C\subseteq E$ is clopen and $r$ is a dyadic rational with $0\le r\le\lambda(C)$, then there is a clopen
set $C'\subseteq C$ with $\lambda(C')=r$. 
%(Choose $n$ so large that $C$ is a union of sets $[w]$ with $|w|=n$ and $2^nr$ is an integer, and take $2^nr$ of these sets.)
\end{enumerate}

\subsection{The groups}
Let $G=\Homeo(E)$ be equipped with the topology of uniform convergence, $P=\{g\in G: g\text{ is order preserving}\}$ and
$L=\{g\in G:g_*\lambda=\lambda\}$.
Every $p\in P$ maps gaps onto gaps, hence $p(E^-)=E^-$, $p(E^+)=E^+$ and
\begin{equation}\label{eq:PJ}
\{p(J):J\in\cJ\}=\cJ\qquad\text{for every }p\in P.
\end{equation}

We omit the proof of following lemma.

\begin{lemma}\label{lem:top}

\begin{enumerate}[label=\textup{(\alph*)}]
\item\label{topA} For any clopen sets $A,B\subseteq E$ the set $\{g\in G:g(A)=B\}$ is clopen in $G$.
\item\label{topB} For $g\in G$ and $n\ge1$ let
\[
N_n(g)=\{h\in G: h^{-1}[w]=g^{-1}[w]\text{ for every word $w$ with }|w|=n\}.
\]
Then $\{N_n(g):n\ge1\}$ is a base of neighbourhoods of $g$ in $G$.
\item\label{topC} $P$ and $L$ are closed subgroups of $G$.
\end{enumerate}
\end{lemma}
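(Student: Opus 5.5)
The plan is to prove the three parts in order, relying only on the identification of $E$ with $\cs$, on the fact that $\mathcal B$ (the clopen sets) is countable, and on the fact that every clopen set is a finite union of cylinders $[w]$.

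For \ref{topA}, fix clopen $A,B$ and $g\in G$ with $g(A)=B$. Since $A$ and $E\setminus A$ are compact and $B$, $E\setminus B$ are open, there is $\varepsilon>0$ such that the $\varepsilon$-neighbourhood of $g(A)$ lies in $B$ and the $\varepsilon$-neighbourhood of $g(E\setminus A)$ lies in $E\setminus B$. Concretely, one can take $\varepsilon$ less than the distance between $B$ and $E\setminus B$, which is positive since both are compact and disjoint. Then every $h$ with $\sup_x|h(x)-g(x)|<\varepsilon$ satisfies $h(A)\subseteq B$ and $h(E\setminus A)\subseteq E\setminus B$. Because $h$ is a bijection, this gives $h(A)=B$, so the set $\{g:g(A)=B\}$ is open. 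Its complement is the union of the sets $\{g:g(A)=B'\}$ over clopen $B'\neq B$, each open by the same argument, so the set is also closed.

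For \ref{topB}, the set $N_n(g)$ is a finite intersection of sets of the form $\{h: h(g^{-1}[w])=[w]\}$; note that $h^{-1}[w]=g^{-1}[w]$ is equivalent to $h(g^{-1}[w])=[w]$. By \ref{topA}, $N_n(g)$ is therefore an open neighbourhood of $g$. Conversely, if $h\in N_n(g)$, then for each $x$ the points $h(x)$ and $g(x)$ lie in the same cylinder $[w]$ with $|w|=n$, whose diameter is $3^{-n}$. Hence $\sup|h-g|\le 3^{-n}$, so the sets $N_n(g)$ shrink into every uniform ball around $g$.

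For \ref{topC}, $P$ and $L$ are subgroups: compositions and inverses of order preserving bijections are order preserving, and $(gh)_*\lambda=g_*h_*\lambda$. To see that $P$ is closed, note that if $g_k\to g$ uniformly and each $g_k$ preserves order, then $x\le y$ implies $g(x)\le g(y)$ by passing to the limit, and injectivity of $g$ gives strict monotonicity. To see that $L$ is closed, suppose $g_k\to g$ with $g_k\in L$ and fix a word $w$. By \ref{topB}, for large $k$ we have $g_k^{-1}[w]=g^{-1}[w]$, so $\lambda(g^{-1}[w])=\lambda(g_k^{-1}[w])=\lambda([w])$. The measures $g_*\lambda$ and $\lambda$ thus agree on all cylinders, which form a $\pi$-system generating the Borel sets, so $g_*\lambda=\lambda$.

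The only step needing any care is the bijectivity argument in \ref{topA}; everything else is routine.
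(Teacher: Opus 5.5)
Your proof is correct in all three parts. The paper omits the proof of this lemma, and your argument is the standard one it takes for granted: the positive distance between $B$ and $E\setminus B$ plus surjectivity for (a), the $3^{-n}$ diameter bound for (b), and pointwise limits plus agreement of $g_*\lambda$ and $\lambda$ on cylinders for (c).

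Only cosmetic points are missing:
\begin{itemize}
\item closure of $L$ under inverses follows from $(g^{-1})_*\lambda=(g^{-1})_*g_*\lambda=\lambda$;
\item the $\pi$-system of cylinders should include $\emptyset$;
\item the degenerate cases $B\in\{\emptyset,E\}$ in (a) are trivial.
\end{itemize}
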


\subsection{Measures on the ordered Cantor set}
By a \emph{measure} we mean a Borel probability measure on $E$. A measure is \emph{full} if every nonempty open set has positive measure.
The \emph{clopen values set} of $\mu$ is $S(\mu)=\{\mu(A):A\subseteq E\text{ clopen}\}$, and the special clopen values set
$\St(\mu)=\{\mu(J):J\in\cJ\}$ was defined in the introduction. In \cite{Ak1} the numbers $0$ and $1$ are included in $\St(\mu)$, however, this makes no difference.
For $g\in G$ the measure $g_*\mu$ is defined by $g_*\mu(B)=\mu(g^{-1}B)$; clearly the measure $g_*\mu$ is full and nonatomic if $\mu$ is, and
\begin{equation}\label{eq:Sinv}
S(g_*\mu)=S(\mu)\ \text{ for } g\in G,\qquad \St(p_*\mu)=\St(\mu)\ \text{ for } p\in P,
\end{equation}
the latter by \eqref{eq:PJ}. Note that $S(\lambda)=\D\cup\{0,1\}$.

The \emph{distribution function} of a measure $\mu$ is $F_\mu\colon E\to[0,1]$, $F_\mu(t)=\mu(E_t)$.

\begin{lemma}\label{lem:F}
Let $\mu$ be a full nonatomic measure. Then $F_\mu$ is a continuous nondecreasing map of $E$ onto $[0,1]$, and for $s<t$ in $E$ we have
$F_\mu(s)=F_\mu(t)$ if and only if $(s,t)$ is a gap. Consequently:
\begin{enumerate}[label=\textup{(\alph*)}]
\item\label{FA} for $c\in[0,1]$, the fibre $F_\mu^{-1}(c)$ consists of the two endpoints of a gap if $c\in\St(\mu)$, and it is a singleton otherwise;
\item\label{FB} $\St(\mu)$ is a countable dense subset of $(0,1)$, and $J\mapsto\mu(J)$ is a strictly increasing bijection of $\cJ$ onto $\St(\mu)$.
\end{enumerate}
\end{lemma}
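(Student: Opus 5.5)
The plan is to prove the three analytic properties of $F_\mu$ first and then read off (a) and (b) from them.

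\emph{Monotonicity and continuity.} Monotonicity is immediate: $s\le t$ gives $E_s\subseteq E_t$. For continuity I will compute one-sided limits via $\sigma$-additivity. If $t_n\downarrow t$, then $\bigcap_n E_{t_n}=E_t$, so $F_\mu(t_n)\to F_\mu(t)$. If $t_n\uparrow t$ with $t_n<t$, then $\bigcup_n E_{t_n}=E_t\setminus\{t\}$, so $F_\mu(t_n)\to F_\mu(t)-\mu(\{t\})=F_\mu(t)$, because $\mu$ is nonatomic. A monotone function on the compact ordered space $E$ with these one-sided limits is continuous in the order topology, which is the topology of $E$.

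\emph{Surjectivity.} Here $F_\mu(0)=\mu(\{0\})=0$ and $F_\mu(1)=1$, and $E$ is compact. So the image is a compact subset of $[0,1]$ containing $0$ and $1$, and I must show it has no holes. Suppose $c\in(0,1)$ is not in the image. Put $A=\{t:F_\mu(t)<c\}$ and $B=\{t:F_\mu(t)>c\}$. These are disjoint nonempty sets, $A$ is an initial segment and $B$ is a final segment, both are clopen by continuity, and $A\cup B=E$. Then $s=\max A$ and $t=\min B$ exist and $(s,t)$ is a gap. But then $F_\mu(t)-F_\mu(s)=\mu((s,t])=\mu(\{t\})=0$, which contradicts $F_\mu(s)<c<F_\mu(t)$.

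\emph{Fibres.} For $s<t$ we have $F_\mu(t)-F_\mu(s)=\mu((s,t]\cap E)$. If $(s,t)$ is a gap, this set is $\{t\}$, which has measure $0$. If $(s,t)$ is not a gap, pick $u\in E$ with $s<u<t$; then $(s,t]\cap E$ contains the set $\{x\in E: s<x<t\}$. This set is open in $E$, since $E$ carries the order topology, and it is nonempty, so it has positive measure because $\mu$ is full. This proves the equivalence.

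\emph{Part (a).} Let $c\in[0,1]$. By surjectivity and monotonicity, $F_\mu^{-1}(c)$ is a nonempty closed convex subset of $E$. Any two distinct points of it form a gap by the equivalence just proved. A convex set in which every pair of points forms a gap has at most two elements, since three points $s<u<t$ would give a point strictly between $s$ and $t$. So the fibre is either a singleton or the pair of endpoints $\{s,t\}$ of a gap.

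It remains to match this with $\St(\mu)$. If the fibre is a gap $\{s,t\}$, then $s\in E^-$, so $E_s\in\cJ$ and $c=\mu(E_s)\in\St(\mu)$. Conversely, if $c=\mu(E_s)$ with $s\in E^-$, let $t$ be the right partner of $s$; then $F_\mu(t)=F_\mu(s)=c$ and the fibre contains two points. Note that this makes $c\neq 0,1$: we have $s>0$ unless $E_s$ is empty, and $t<1$.

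\emph{Part (b).} The map $J\mapsto\mu(J)$ is onto $\St(\mu)$ by definition. It is strictly increasing because for $s<s'$ in $E^-$ the pair $(s,s')$ is not a gap: the partner of $s$ lies strictly between them. Hence it is also injective. Countability of $\St(\mu)$ follows from countability of $\cJ$. For density, fix $a<b$ in $(0,1)$. The set $F_\mu^{-1}((a,b))$ is a nonempty open subset of $E$ by surjectivity. Since $E^-$ is dense in $E$, it contains some $s\in E^-$, and then $F_\mu(s)\in(a,b)\cap\St(\mu)$.

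I do not expect a serious obstacle; the surjectivity step is the only place that needs the small clopen-cut argument.
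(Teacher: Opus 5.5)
Your proof is correct and follows essentially the same route as the paper. The gap criterion comes from $F_\mu(t)-F_\mu(s)=\mu(E\cap(s,t])$ together with fullness. Surjectivity uses the same cut argument, with your $\max\{F_\mu<c\}$ and $\min\{F_\mu>c\}$ playing the role of the paper's $\max F_\mu^{-1}[0,c]$ and $\min F_\mu^{-1}[c,1]$. Part (b) comes from strict monotonicity of $F_\mu$ on $E^-$ plus density of $E^-$.

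You give more detail than the paper on continuity and on the bound on fibre size, which the paper leaves implicit. The aside ``unless $E_s$ is empty'' is unnecessary, since every $s=w0\oinf\in E^-$ satisfies $s>0$.
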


\begin{proof}
Clearly $F_\mu$ is nondecreasing, and it is continuous since $\mu$ is nonatomic. For $s<t$ we have $F_\mu(t)-F_\mu(s)=\mu(E\cap(s,t])$ since the $\mu$-measure of singletons is zero. So $F_\mu(s)<F_\mu(t)$ iff $(s,t)$ is not a gap.
Further, $F_\mu(0)=\mu(\{0\})=0$ and $F_\mu(1)=1$. If some $c\in(0,1)$ were not a value of $F_\mu$, we could put
$s=\max F_\mu^{-1}[0,c]$ and $t=\min F_\mu^{-1}[c,1]$; then $s<t$, $E\cap(s,t)=\emptyset$ and $F_\mu(s)<c<F_\mu(t)$, a contradiction.

\ref{FA} By the first part, every fibre is a singleton or a pair of endpoints of a gap, and the fibre of $c$ is of the second kind
if and only if it contains a point $s\in E^-$, that is, if and only if $c=\mu(E_s)$ for some $s\in E^-$.

\ref{FB} If $s<t$ are in $E^-$, then $(s,t)$ is not a gap, because $t\notin E^+$. Hence, by the first part, $F_\mu$ is strictly increasing on $E^-$. As $0$ and $1$ are not endpoints of gaps, $F_\mu(s)\in(0,1)$ for $s\in E^-$.
Finally, $\St(\mu)=F_\mu(E^-)$ is dense in $F_\mu(E)=[0,1]$ because $E^-$ is dense in $E$.
\end{proof}

The following result is due to Akin \cite[Theorem 2.10]{Ak1}.

\begin{proposition}[Akin]\label{prop:akin}
Let $\mu$ and $\nu$ be full nonatomic measures on $E$. There exists $p\in P$ with $p_*\mu=\nu$ if and only if $\St(\mu)=\St(\nu)$.
Moreover, such $p$ is unique.
\end{proposition}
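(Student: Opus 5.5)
The plan is to build $p$ from the two distribution functions $F_\mu$ and $F_\nu$, using Lemma \ref{lem:F}. Necessity is immediate: if $p_*\mu=\nu$ with $p\in P$, then \eqref{eq:PJ} gives
\[
\St(\nu)=\{\mu(p^{-1}J):J\in\cJ\}=\St(\mu).
\]

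For sufficiency, assume $\St(\mu)=\St(\nu)=S$. By Lemma \ref{lem:F}, both $F_\mu$ and $F_\nu$ are continuous nondecreasing surjections $E\to[0,1]$. Their fibres over $c\in S$ are the two endpoints of a gap, and all other fibres are singletons. For $x\in E$ put $c=F_\mu(x)$.
\begin{itemize}
\item If $c\notin S$, let $p(x)$ be the unique point of $F_\nu^{-1}(c)$.
\item If $c\in S$, then $x$ is the left or the right endpoint of the gap $F_\mu^{-1}(c)$. Let $p(x)$ be the corresponding (left or right) endpoint of the gap $F_\nu^{-1}(c)$.
\end{itemize}
This gives a bijection $E\to E$ with $F_\nu\circ p=F_\mu$. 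The symmetric construction, with $\mu$ and $\nu$ exchanged, is its inverse.

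Next I check that $p$ is strictly increasing. Take $x<y$. If $F_\mu(x)<F_\mu(y)$, then $F_\nu(p(x))<F_\nu(p(y))$, so $p(x)<p(y)$ because $F_\nu$ is nondecreasing. If $F_\mu(x)=F_\mu(y)$, then $(x,y)$ is a gap, and $p$ sends its left endpoint to the left endpoint of the image gap, so again $p(x)<p(y)$. An order-preserving bijection of the compact linearly ordered space $E$ onto itself is a homeomorphism, since the order topology coincides with the topology of $E$. Hence $p\in P$.

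To verify $p_*\mu=\nu$, note that $p(E_t)=E_{p(t)}$ for every $t$. Therefore
\[
\nu(E_{p(t)})=F_\nu(p(t))=F_\mu(t)=\mu(E_t)\qquad\text{for all }t\in E.
\]
So $p_*\mu$ and $\nu$ agree on all initial segments $E_t$. These form a $\pi$-system generating the Borel sets, since differences $E_t\setminus E_s$ give the order intervals. Consequently $p_*\mu=\nu$.

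For uniqueness, suppose $q\in P$ also satisfies $q_*\mu=\nu$. Then $F_\nu(q(t))=\mu(q^{-1}E_{q(t)})=\mu(E_t)=F_\mu(t)$, so $q(t)$ lies in the same fibre of $F_\nu$ as $p(t)$. On singleton fibres this forces $q(t)=p(t)$. On a gap fibre, $q$ is order preserving and maps $E^-$ to $E^-$, so it sends left endpoints to left endpoints; again $q(t)=p(t)$.

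The main delicate step is the bijectivity and order preservation near gaps, i.e.\ that the fibre structure of $F_\mu$ and $F_\nu$ matches exactly. This is precisely where the hypothesis $\St(\mu)=\St(\nu)$ enters, through Lemma \ref{lem:F}\ref{FA}.
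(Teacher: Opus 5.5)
Your proof is correct. The paper does not prove this proposition; it quotes it from Akin \cite[Theorem 2.10]{Ak1}. So your argument is a self-contained substitute for a citation rather than a variant of an argument in the text.

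It is the natural proof given the paper's set-up. Lemma \ref{lem:F} is exactly the fibre analysis your construction needs. Your $p$, characterized by $F_\nu\circ p=F_\mu$, is the map that Glasner and Weiss try to define (Remark \ref{rem:gap}). The hypothesis $\St(\mu)=\St(\nu)$ is precisely what makes the gap fibres of $F_\mu$ and $F_\nu$ lie over the same values, which is the point the paper says is missing in \cite{GW}.

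Citing Akin keeps the paper short and places the statement in his general theory of measures on ordered Cantor spaces, which also supplies Lemma \ref{lem:exist}. Your version makes the paper self-contained, at the price of two routine points you should spell out:
\begin{enumerate}
\item The topology of $E$ is its order topology, since $E$ is compact in $\mathbb{R}$, so an order automorphism is a homeomorphism.
\item The sets $E_t$ generate the Borel $\sigma$-algebra. This holds because $\{x>s\}=E\setminus E_s$, and $\{x<t\}$ is either empty, a single $E_{t'}$ (when $t\in E^+$), or a countable increasing union of sets $E_{t_n}$; together with second countability of $E$, this gives every open set.
\end{enumerate}

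Your direct fibre argument for uniqueness is the right choice. Reducing uniqueness to $P\cap L_\mu=\{\id\}$ would be circular here, because the paper derives Corollary \ref{cor:criterion}\ref{critA} from this uniqueness.
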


\begin{corollary}\label{cor:criterion}
\begin{enumerate}[label=\textup{(\alph*)}]
\item\label{critA} $P\cap L=\{\id\}$.
\item\label{critB} For every $g\in G$, the measure $g_*\lambda$ is full and nonatomic and $\St(g_*\lambda)\subseteq\D$.
\item\label{critC} For $g\in G$ we have $g\in PL$ if and only if $\St(g_*\lambda)=\D$, if and only if for every dyadic rational $d\in(0,1)$ there is
$J\in\cJ$ with $\lambda(g^{-1}J)=d$. Consequently, $g\in LP$ if and only if $\St(g^{-1}_*\lambda)=\D$.
\item\label{critD} For $g,g'\in G$ we have $PgL=Pg'L$ if and only if $\St(g_*\lambda)=\St(g'_*\lambda)$.
\end{enumerate}
\end{corollary}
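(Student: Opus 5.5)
All four parts will follow from Akin's Proposition \ref{prop:akin} together with Lemma \ref{lem:F} and the invariance \eqref{eq:Sinv}.

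\ref{critA} If $g\in P\cap L$, then $g\in P$ and $g_*\lambda=\lambda=\id_*\lambda$. The uniqueness part of Proposition \ref{prop:akin}, applied with $\mu=\nu=\lambda$, then gives $g=\id$.

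\ref{critB} Since $\lambda$ is full and nonatomic, so is $g_*\lambda$, because $g$ maps nonempty open sets to nonempty open sets and singletons to singletons. For $J\in\cJ$ the set $g^{-1}J$ is clopen. Hence $g_*\lambda(J)=\lambda(g^{-1}J)\in S(\lambda)=\D\cup\{0,1\}$. By Lemma \ref{lem:F}\ref{FB} the values $\mu(J)$ for $J\in\cJ$ lie in $(0,1)$, so $\St(g_*\lambda)\subseteq\D$.

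\ref{critC} Suppose first that $g=pl$ with $p\in P$ and $l\in L$. Then $g_*\lambda=p_*l_*\lambda=p_*\lambda$, and \eqref{eq:Sinv} gives $\St(g_*\lambda)=\St(\lambda)=\D$. Conversely, suppose $\St(g_*\lambda)=\D=\St(\lambda)$. Proposition \ref{prop:akin} gives $p\in P$ with $p_*\lambda=g_*\lambda$. Put $l=p^{-1}g$. Then $l_*\lambda=p^{-1}_*g_*\lambda=\lambda$, so $l\in L$ and $g=pl\in PL$.

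The second formulation of \ref{critC} comes from unwinding the definition. We have $\St(g_*\lambda)=\{\lambda(g^{-1}J):J\in\cJ\}$, and by \ref{critB} this set is contained in $\D$. So equality with $\D$ means exactly that every dyadic $d\in(0,1)$ equals $\lambda(g^{-1}J)$ for some $J\in\cJ$.

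For the last clause of \ref{critC}, note that $g\in LP$ if and only if $g^{-1}\in P^{-1}L^{-1}=PL$. By what was just shown, this holds if and only if $\St(g^{-1}_*\lambda)=\D$.

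\ref{critD} Suppose $g'=pgl$ with $p\in P$ and $l\in L$. Then $g'_*\lambda=p_*g_*\lambda$, and \eqref{eq:Sinv} gives $\St(g'_*\lambda)=\St(g_*\lambda)$. Conversely, suppose the two special clopen values sets are equal. Both measures are full and nonatomic by \ref{critB}, so Proposition \ref{prop:akin} gives $p\in P$ with $p_*g_*\lambda=g'_*\lambda$. Put $l=g^{-1}p^{-1}g'$. Then $l_*\lambda=g^{-1}_*p^{-1}_*g'_*\lambda=g^{-1}_*g_*\lambda=\lambda$, so $l\in L$. Hence $g'=pgl\in PgL$, and the two double cosets coincide.

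There is no real obstacle here: all the depth lies in Akin's theorem, which we assume. The only care needed is to keep the composition order straight in $(gh)_*=g_*h_*$ and to apply Proposition \ref{prop:akin} only to full nonatomic measures, which \ref{critB} guarantees.
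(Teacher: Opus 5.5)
Your proof is correct and follows essentially the paper's argument: (a), (b) and (d) go exactly as in the paper, via the uniqueness and existence parts of Proposition \ref{prop:akin} together with \eqref{eq:Sinv}. The only differences are that you prove \ref{critC} directly instead of obtaining it as the special case $g'=\id$ of \ref{critD}, and that you spell out the routine second equivalence in \ref{critC}, which the paper leaves implicit.
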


\begin{proof}
\ref{critA} follows from the uniqueness in Proposition \ref{prop:akin} applied to $\mu=\nu=\lambda$.
The first part of \ref{critB} was noted above. For the second part, by \eqref{eq:Sinv}, $\St(g_*\lambda)\subseteq S(g_*\lambda)\setminus\{0,1\}=S(\lambda)\setminus\{0,1\}=\D$.
\ref{critD} If $g'=pgl$ with $p\in P$, $l\in L$, then $g'_*\lambda=p_*(g_*\lambda)$ and by \eqref{eq:Sinv} 
\[\St(g'_*\lambda)=\St(p_*(g_*\lambda))=\St(g_*\lambda).\]
Conversely, if $\St(g_*\lambda)=\St(g'_*\lambda)$,
then Proposition \ref{prop:akin} gives $p\in P$ with $p_*g_*\lambda=g'_*\lambda$, so $l=g'^{-1}pg\in L$ and $g'=pgl^{-1}$.
\ref{critC} is the special case $g'=\id$ of \ref{critD}, because $\St(\lambda)=\D$. The last statement of \ref{critC} follows from $LP=(PL)^{-1}$.
\end{proof}

\begin{remark}\label{rem:gap}
We can now locate the gap in the proof of \cite[Theorem 2.1]{GW}. Given $g\in G$ and $\nu=g_*\lambda$, a map $p$ is defined there on points $t$ for which $E_t$ is clopen by the requirement that
$E_{p(t)}$ be clopen and $\lambda(E_t)=\nu(E_{p(t)})$, and $p$ is then extended to a homeomorphism of $E$ with $p_*\lambda=\nu$. 
However, for $p(t)$ to exist for every $t\in E^-$ one needs the inclusion $\D\subseteq\St(\nu)$.
%Once that inclusion holds, the equality $\St(\nu)=\D$ yields that the argument of \cite{GW} is correct.
\end{remark}

\section{A homeomorphism which is not in \texorpdfstring{$PL$}{PL}}\label{sec:example}

Let
\[
\xs=010101\ldots\in E ,
\]
and for $n\ge1$ let $u_n$ be the word of length $n$ which agrees with $\xs$ at the first $n-1$ places and differs from $\xs$ at the $n$-th place.
Further, for $j\ge1$ let
\[
v_{2j-1}=0^{j-1}11,\qquad v_{2j}=0^{j-1}10 .
\]
Thus
\[
\begin{array}{c|cccccc}
n & 1&2&3&4&5&6\\\hline
u_n & 1 & 00 & 011 & 0100 & 01011 & 010100\\
v_n & 11 & 10 & 011 & 010 & 0011 & 0010
\end{array}
\]
Put $K_n=[u_n]$ and $K_n'=[v_n]$. The sets $K_n$, $n\ge1$, form a partition of $E\setminus\{\xs\}$ into clopen sets, and the sets $K_n'$, $n\ge1$,
form a partition of $E\setminus\{\zinf\}$ into clopen sets. We define $g_0\colon E\to E$ by
\begin{equation}\label{eq:g0}
g_0(v_ny)=u_ny\quad (n\ge1,\ y\in\cs),\qquad g_0(\zinf)=\xs .
\end{equation}

\begin{proposition}\label{prop:example}
The map $g_0$ is a homeomorphism of $E$, and
\begin{enumerate}[label=\textup{(\alph*)}]
\item\label{exA} $\St((g_0)_*\lambda)=\D\setminus\{\frac12\}$,
\item\label{exB} $\St((g_0^{-1})_*\lambda)=\D$.
\end{enumerate}
Consequently $g_0\in LP\setminus PL$ and $g_0^{-1}\in PL\setminus LP$.
\end{proposition}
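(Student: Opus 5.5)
The plan is to check the homeomorphism property directly and then compute both special clopen values sets through the partitions $\{K_n\}$ and $\{K_n'\}$. Throughout I use $\lambda(K_n)=2^{-n}$ and $\lambda(K_n')=2^{-|v_n|}$, where $|v_{2j-1}|=|v_{2j}|=j+1$. On each $K_n'$ the map $g_0$ is a rescaling that carries the relative order and the relative normalized measure of $K_n'$ onto those of $K_n$.

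\emph{Homeomorphism.} Since the $K_n'$ partition $E\setminus\{\zinf\}$ and the $K_n$ partition $E\setminus\{\xs\}$, the map $g_0$ is a bijection. It is continuous on each clopen $K_n'$. At $\zinf$, a base of neighbourhoods is given by the tails $\{\zinf\}\cup\bigcup_{n\ge m}K_n'$. Their images are $\{\xs\}\cup\bigcup_{n\ge m}K_n$, and these shrink to $\xs$ because every point of $K_n$ agrees with $\xs$ on its first $n-1$ places. Hence $g_0$ is a continuous bijection of a compact Hausdorff space onto itself, and therefore a homeomorphism.

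\emph{Part \ref{exA}.} I compute $\lambda(g_0^{-1}E_t)$ for $t\in E^-$; note that $t\neq\xs$, since $\xs$ is not a gap endpoint. The position of $u_n$ relative to $\xs$ is decided at place $n$. For even $n$ the block $K_n$ lies below $\xs$, and these blocks increase with $n$: $K_2=[00]<K_4<\cdots$. For odd $n$ the block $K_n$ lies above $\xs$, and these blocks decrease with $n$.

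First suppose $t<\xs$, say $t\in K_{2k}$. Then
\[E_t=K_2\cup\dots\cup K_{2k-2}\cup (E_t\cap K_{2k}).\]
Using $\lambda(K'_{2i})=2^{-(i+1)}$, the preimage has measure
\[\tfrac12-2^{-k}+2^{-(k+1)}d,\]
where $d$ ranges over the values of special clopen initial segments of $[u_{2k}]$, rescaled to that block. These values are $\D\cup\{1\}$; the value $1$ occurs because the right end of $K_{2k}$ is a left gap endpoint. Letting $k$ vary, the intervals $[\frac12-2^{-k},\frac12-2^{-k-1}]$ cover $(0,\frac12)$, so every dyadic in $(0,\frac12)$ is attained.

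Now suppose $t>\xs$. The same computation applied to the complement $E\setminus E_t$, which is a union of odd-indexed blocks and a final segment of one of them, yields exactly the dyadics in $(\frac12,1)$.

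It remains to see that $\frac12$ is never attained. For $t<\xs$ the value above is strictly less than $\frac12$, and for $t>\xs$ it is strictly greater than $\frac12$. This step, the bookkeeping of the two interleaved orders and the verification that $\frac12$ is missed, is the main point to get right.

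\emph{Part \ref{exB}.} I compute $\lambda(g_0E_t)$ for $t\in E^-$. The blocks $K_n'$ are ordered decreasingly in $n$: $[11]>[10]>[011]>[010]>\cdots$. For $t\in K_m'$ this gives
\[E_t=\bigcup_{n>m}K_n'\cup(E_t\cap K_m'),\]
whose image has measure
\[\sum_{n>m}2^{-n}+2^{-m}d=2^{-m}+2^{-m}d,\qquad d\in\D\cup\{1\}.\]
As $m$ varies this covers every dyadic in $\bigcup_m(2^{-m},2^{-m+1}]$, which is all of $\D$. Together with Corollary \ref{cor:criterion}\ref{critB}, this gives $\St((g_0^{-1})_*\lambda)=\D$.

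Finally, by Corollary \ref{cor:criterion}\ref{critC}, part \ref{exA} gives $g_0\notin PL$ and part \ref{exB} gives $g_0\in LP$. Taking inverses, and using $LP=(PL)^{-1}$, gives $g_0^{-1}\in PL\setminus LP$.
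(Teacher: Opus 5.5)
Your proposal is correct and follows essentially the same route as the paper. The paper also computes $\lambda(g_0^{-1}E_t)$ and $\lambda(g_0E_t)$ for $t\in E^-$ block by block. It packages your inline rescaling step as Lemma \ref{lem:local}, and it obtains the offset below $K_{2j-1}$ by the same complement computation you use.

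There is one harmless slip in part (b). The normalized value $d=1$ is not available for $m=1$, because $\max K_1'=\oinf\notin E^-$. So the values you actually obtain are the dyadics in $(0,1)$, not $(0,1]$. Your closing appeal to Corollary \ref{cor:criterion}\ref{critB} quietly corrects this, so the conclusion stands.
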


For the proof we need a simple observation.

\begin{lemma}\label{lem:local}
Let $\mu$ be a full nonatomic measure, let $u$ be a word and assume that there is a constant $c>0$ such that $\mu(B)=c\lambda(B)$ for every Borel set $B\subseteq[u]$.
Let $a=\mu(\{x\in E:x<u\zinf\})\in\D\cup\{0\}$ and $m=\mu([u])=2^{-k}$ for some integer $k\ge0$. Then
\[
\{\mu(J): J\in\cJ,\ \max J\in[u]\setminus\{u\oinf\}\}
\]
is the set of all dyadic rationals in the open interval $(a,a+m)$.
\end{lemma}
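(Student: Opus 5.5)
The plan is to parametrize the relevant $J$ by words extending $u$ and compute $\mu(J)$ directly. If $J\in\cJ$ has $\max J=t\in[u]\setminus\{u\oinf\}$, then $t\in E^-$, so $t=w0\oinf$ for some word $w$. Since $t$ extends $u$ and $t\neq u\oinf$, we may pad $w$ with a $0\oinf$ tail if needed and write $t=uz0\oinf$, where $z$ is a finite word (possibly empty). Conversely, every such point $uz0\oinf$ lies in $E^-\cap[u]$ and differs from $u\oinf$. So the set in question is
\[
\{\mu(E_t): t=uz0\oinf,\ z \text{ a finite word}\}.
\]

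Next I split $E_t$ as $\{x<u\zinf\}\cup(E_t\cap[u])$. This works because every $x\le t$ with $x\notin[u]$ satisfies $x<u\zinf$: the set $[u]$ is convex and $t\in[u]$. Hence $\mu(E_t)=a+\mu(E_t\cap[u])$, and by the hypothesis $\mu(E_t\cap[u])=c\,\lambda(E_t\cap[u])$. The shift $uy\mapsto y$ maps $[u]$ onto $E$ order-isomorphically and multiplies $\lambda$ by $2^{|u|}$. Therefore
\[
\lambda(E_t\cap[u])=2^{-|u|}\lambda(E_{z0\oinf}).
\]
Taking $B=[u]$ gives $c\,2^{-|u|}=m$, so $\mu(E_t)=a+m\,\lambda(E_{z0\oinf})$.

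By \ref{clopA}, as $z$ ranges over all finite words, $z0\oinf$ ranges over all of $E^-$. So $\lambda(E_{z0\oinf})$ ranges exactly over $\D$, and the set in question equals $a+m\D$. Since $m=2^{-k}$ and $a$ is a dyadic rational (or $0$), the map $d\mapsto a+md$ is a bijection from the dyadic rationals in $(0,1)$ onto the dyadic rationals in $(a,a+m)$. This uses that $m^{-1}(r-a)$ is dyadic whenever $r$ is.

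I expect no real obstacle. The only points needing care are the bookkeeping that the left endpoints of gaps inside $[u]$, other than $u\oinf$, are exactly the points $uz0\oinf$, and the fact that $a$ is dyadic. The latter holds because $\{x<u\zinf\}$ is clopen (it is empty or a member of $\cJ$), so its measure lies in $S(\mu)$. For the $\mu$ of interest, $S(\mu)=\D\cup\{0,1\}$, and in any case $a\in\D\cup\{0\}$ is part of the hypothesis as stated.
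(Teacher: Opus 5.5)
Your proof is correct and is essentially the paper's argument: the relevant $J$ are exactly the sets $E_{ut}$ with $t\in E^-$ (your $t=z0\oinf$), rescaling $[u]$ onto $E$ gives $\mu(E_{ut})=a+c2^{-|u|}\lambda(E_t)=a+m\lambda(E_t)$, and \ref{clopA} together with the dyadicity of $a$ and $m=2^{-k}$ finishes. One phrase is muddled, though harmless: no ``padding'' of $w$ is needed, since $t=w0\oinf\ne u\oinf$ forces the last $0$ of $t$ to lie beyond position $|u|$, so $w$ already extends $u$.
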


\begin{proof}
The sets $J\in\cJ$ with $\max J\in[u]\setminus\{u\oinf\}$ are exactly the sets $E_{ut}$ with $t\in E^-$, and
\[
\mu(E_{ut})=a+c\lambda([u]\cap E_{ut})=a+c2^{-|u|}\lambda(E_t)=a+m\lambda(E_t).
\]
The numbers $\lambda(E_t)$, $t\in E^-$, are exactly the elements of $\D$. Finally, $a+m\D$ is the set of all dyadic rationals in $(a,a+m)$,
because $a$ is dyadic and $m=2^{-k}$.
%, so that a number $q\in(a,a+m)$ is dyadic if and only if $(q-a)/m$ is.
\end{proof}

\begin{proof}[Proof of Proposition \ref{prop:example}]
The map $g_0$ is a bijection, and it is continuous on each of the clopen sets $K_n'$. Since $K_n'\to 0$, $g_0(K_n')=K_n$ and $K_n\to \xs=g_0(0)$, $g_0$ is continuous at 0. Thus $g_0$ is a continuous bijection of a compact space, hence a homeomorphism.

\ref{exA} Let $\nu=(g_0)_*\lambda$. By \eqref{eq:g0}, for every Borel set $B\subseteq K_n$ we have $\nu(B)=\lambda(g_0^{-1}B)=2^{|u_n|-|v_n|}\lambda(B)$, and
\begin{equation}\label{eq:nuK}
\nu(K_{2j-1})=\nu(K_{2j})=\lambda([v_{2j}])=2^{-(j+1)}\qquad(j\ge1).
\end{equation}
The $n$-th digit of $\xs$ is $1$ if and only if $n$ is even. Hence $K_n$ lies below $\xs$ for $n$ even and above $\xs$ for $n$ odd. Moreover
\[
K_2<K_4<K_6<\dots<\xs<\dots<K_5<K_3<K_1 ,
\]
where $A<B$ means that every element of $A$ is smaller than every element of $B$. For $a_n=\nu(\{x:x<u_n\zinf\})$ we obtain from \eqref{eq:nuK}
\[
a_{2j}=\sum_{i<j}2^{-(i+1)}=\tfrac12-2^{-j},\qquad a_{2j-1}=1-\sum_{i\le j}2^{-(i+1)}=\tfrac12+2^{-(j+1)} .
\]
Every $J\in\cJ$ has its maximum in $E^-$, hence in some $K_n$, because $\xs\notin E^-$. By Lemma \ref{lem:local} and \eqref{eq:nuK}, the values $\nu(J)$ for
$J\in\cJ$ with $\max J\in K_{2j}$ are precisely the dyadic rationals in the interval $(\frac12-2^{-j},\frac12-2^{-(j+1)}]$ (the right endpoint corresponds to $\max J=\max K_{2j}$),
and the values $\nu(J)$ for $J\in\cJ$ with $\max J\in K_{2j-1}$ are precisely the dyadic rationals in $(\frac12+2^{-(j+1)},\frac12+2^{-j}]$ different from $1$.
The union of these intervals over $j\ge1$ is $(0,\frac12)\cup(\frac12,1]$. Hence $\St(\nu)=\D\setminus\{\frac12\}$.

\ref{exB} Let $\mu=(g_0^{-1})_*\lambda$. For every Borel set $B\subseteq K_n'$ we have $\mu(B)=\lambda(g_0B)=2^{|v_n|-|u_n|}\lambda(B)$ and $\mu(K_n')=\lambda(K_n)=2^{-n}$.
Moreover $\zinf<\dots<K_3'<K_2'<K_1'$, so $\mu(\{x:x<v_n\zinf\})=\sum_{i>n}2^{-i}=2^{-n}$. By Lemma \ref{lem:local}, the values $\mu(J)$ for $J\in\cJ$ with $\max J\in K_n'$ are exactly
the dyadic rationals in $(2^{-n},2^{1-n}]$ different from $1$. The union of these intervals is $(0,1]$, so $\St(\mu)=\D$.

By \ref{exA}, \ref{exB} and Corollary \ref{cor:criterion}\ref{critC}, $g_0\in LP\setminus PL$. Thus also $g_0^{-1}\in PL\setminus LP$.
\end{proof}

\section{Proof of Theorem \ref{thm:main}}\label{sec:bigcell}

Part \ref{main1} is contained in Lemma \ref{lem:top}\ref{topC} and Corollary \ref{cor:criterion}\ref{critA}; if $pl=p'l'$ then $p'^{-1}p=l'l^{-1}\in P\cap L$.

Part \ref{main2}. By Proposition \ref{prop:example}, $g_0\in LP\setminus PL$. If $PL$ were a subgroup, we would have $PL=(PL)^{-1}=LP$. Similarly for $LP$.

Part \ref{main3}. For $d\in\D$ let
\begin{align*}
U_d&=\{g\in G:\ \lambda(g^{-1}J)=d\text{ for some }J\in\cJ\}\\
&=\bigcup\bigl\{\{g\in G:g(B)=J\}:J\in\cJ,\ B\subseteq E\text{ clopen},\ \lambda(B)=d\bigr\}.
\end{align*}
This set is open by Lemma \ref{lem:top}\ref{topA}, and $PL=\bigcap_{d\in\D}U_d$ by Corollary \ref{cor:criterion}\ref{critC}. Hence $PL$ is a $G_\delta$ set.
By the Baire category theorem, to prove that $PL$ is dense it suffices to show that each $U_d$ is dense.

Fix $d\in\D$, $g\in G$ and $n\ge1$; by Lemma \ref{lem:top}\ref{topB} we have to find $h\in N_n(g)\cap U_d$. Let $A_1<A_2<\dots<A_m$, $m=2^n$, be the sets $[w]$, $|w|=n$, in the increasing order,
and let $C_i=g^{-1}(A_i)$, $c_i=\lambda(C_i)$ and $\sigma_i=c_1+\dots+c_i$. The numbers $c_i$ are positive dyadic rationals and $\sigma_m=1$. If $d=\sigma_i$ for some $i<m$, then
$J=A_1\cup\dots\cup A_i\in\cJ$ and $\lambda(g^{-1}J)=d$, so $g\in U_d$. Otherwise there is $i$ with $\sigma_{i-1}<d<\sigma_i$ (where $\sigma_0=0$). Then $r=d-\sigma_{i-1}$ is a dyadic rational with $0<r<c_i$, and by
\ref{clopC} there is a clopen set $C'\subseteq C_i$ with $\lambda(C')=r$. Let $A'=[w0]$ where $A_i=[w]$. All four sets $C'$, $C_i\setminus C'$, $A'$, $A_i\setminus A'$ are nonempty clopen subsets
of $E$, hence they are homeomorphic to $E$. Let $h\in G$ be such that $h=g$ on $E\setminus C_i$, $h(C')=A'$ and $h(C_i\setminus C')=A_i\setminus A'$. Then $h^{-1}(A_k)=C_k$ for all $k$, so $h\in N_n(g)$, and
$J=A_1\cup\dots\cup A_{i-1}\cup A'\in\cJ$ satisfies $\lambda(h^{-1}J)=\sigma_{i-1}+r=d$. Hence $h\in U_d$.

So $PL$ is a dense $G_\delta$ set, and so is $LP=(PL)^{-1}$, since the inversion is a homeomorphism of $G$.

The multiplication map $\varphi\colon P\times L\to PL$, $\varphi(p,l)=pl$, is continuous, and it is a bijection by \ref{main1}. For $g\in PL$ let $g=p_gl_g$ be the unique decomposition. To prove that $\varphi^{-1}$ is continuous,
let $g_k\to g$ in $PL$ and fix $n\ge1$. As above, let $A_1<\dots<A_m$ be the sets $[w]$, $|w|=n$, and let $J_i=A_1\cup\dots\cup A_i\in\cJ$ for $1\le i<m$.
Since $g_{k*}\lambda=(p_{g_k})_*\lambda$, we have
\[
\lambda\bigl(p_{g_k}^{-1}J_i\bigr)=\lambda\bigl(g_k^{-1}J_i\bigr),
\]
and the right-hand side equals $\lambda(g^{-1}J_i)=\lambda(p_g^{-1}J_i)$ as soon as $g_k\in N_n(g)$, that is, for all sufficiently large $k$. Both $p_{g_k}^{-1}J_i$ and $p_g^{-1}J_i$ belong to $\cJ$ by \eqref{eq:PJ},
and $\lambda$ is injective on $\cJ$. Hence, for all sufficiently large $k$, $p_{g_k}^{-1}J_i=p_g^{-1}J_i$ for every $i<m$, so $p_{g_k}^{-1}A_i=p_g^{-1}A_i$ for every $i\le m$, that is, $p_{g_k}\in N_n(p_g)$.
By Lemma \ref{lem:top}\ref{topB}, $p_{g_k}\to p_g$, and therefore also $l_{g_k}=p_{g_k}^{-1}g_k\to p_g^{-1}g=l_g$. The statement about $L\times P\to LP$ follows by taking inverses.

Part \ref{main4}. Let $g\in G$. The sets $PL$ and $g\cdot PL$ are comeager in the Polish group $G$ by Part \ref{main3}, so they intersect: $gpl=p'l'$ for some $p,p'\in P$ and $l,l'\in L$.
Then $g=p'l'l^{-1}\,p^{-1}\in PLP$. Similarly, $g\in LPL$. \qed

\begin{corollary}\label{cor:conj}
Every homeomorphism of $E$ is conjugate to an element of $PL$. More precisely, for every $g\in G$ there are $q\in P$ and $h\in L$ such that both $qgq^{-1}$ and $hgh^{-1}$ belong to $PL$. The same is true for $LP$ in place of $PL$.
\end{corollary}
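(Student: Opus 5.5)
The idea is to use conjugation to move $g$ into one of the comeager sets $PL$ or $LP$. The proof follows the argument of Part \ref{main4} of Theorem \ref{thm:main}, with the translate $g\cdot PL$ replaced by a conjugate.

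Fix $g\in G$ and treat the case $q\in P$ first. I want $q\in P$ with $qgq^{-1}\in PL$. Consider the map
\[
\psi_g\colon P\to G,\qquad \psi_g(q)=qgq^{-1}.
\]
A Baire category argument inside $P$ would need $\psi_g^{-1}(PL)$ to be comeager in $P$, which is not clear. I will instead use an algebraic trick, as in Part \ref{main4}.

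By Part \ref{main4}, $g\in PLP$, so write $g=p_1 l p_2$ with $p_1,p_2\in P$ and $l\in L$. Put $q=p_2$. Then
\[
qgq^{-1}=p_2p_1lp_2p_2^{-1}=(p_2p_1)\,l\in PL.
\]
For $h\in L$, use $g\in LPL$ and write $g=l_1pl_2$. Put $h=l_1^{-1}$. Then
\[
hgh^{-1}=l_1^{-1}l_1pl_2l_1=p\,(l_2l_1)\in PL.
\]

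For $LP$ the same computations work with the other factor. From $g=p_1lp_2$, conjugating by $q=p_1^{-1}$ gives
\[
p_1^{-1}gp_1=l\,(p_2p_1)\in LP.
\]
From $g=l_1pl_2$, conjugating by $h=l_2$ gives
\[
l_2gl_2^{-1}=(l_2l_1)\,p\in LP.
\]

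There is no real obstacle here: everything rests on the decompositions $G=PLP=LPL$ from Part \ref{main4}, and the rest is checking that the conjugator cancels one of the outer factors.
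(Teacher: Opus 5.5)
Your proposal is correct and is exactly the paper's argument: write $g=p_1lp_2$ and $g=l_1pl_2$ using $G=PLP=LPL$, then conjugate by $p_2$, $p_1^{-1}$, $l_1^{-1}$, $l_2$ to cancel an outer factor. All four computations check out, and the aside about a Baire category argument in $P$ is not needed.
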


\begin{proof}
By Theorem \ref{thm:main}\ref{main4} we can write $g=p_1lp_2$ with $p_1,p_2\in P$ and $l\in L$. Then $p_2gp_2^{-1}=(p_2p_1)l\in PL$ and $p_1^{-1}gp_1=l(p_2p_1)\in LP$. Similarly, if $g=l_1pl_2$ with $l_1,l_2\in L$ and $p\in P$, then
$l_1^{-1}gl_1=p(l_2l_1)\in PL$ and $l_2gl_2^{-1}=(l_2l_1)p\in LP$.
\end{proof}

\begin{remark}\label{rem:interior}
The set $PL$ has empty interior, so both $PL$ and its complement are dense in $G$. Indeed, let $g\in G$ and $n\ge1$, let $w=0^n$ be the word of lenght $n$ consisting of zeros. Choose a word $z$ with
$[z]\subsetneq g^{-1}[w]$. Let $h\in G$ be such that $h=g$ outside $g^{-1}[w]$, $h(zy)=w0g_0(y)$ for $y\in\cs$, where $g_0$ is the homeomorphism from Section \ref{sec:example}, and $h$ maps
$g^{-1}[w]\setminus[z]$ homeomorphically onto $[w1]$. Then $h\in N_n(g)$. Since $[w]$ is an initial segment of $E$, we have $E_{w0\xs}\subseteq[w0]$ and $h^{-1}(E_{w0\xs})=\{zy: y\in g_0^{-1}(E_{\xs})\}$.
This set has $\lambda$-measure $2^{-|z|-1}$. As $w0\xs$ is not an endpoint of a gap, $2^{-|z|-1}\notin\St(h_*\lambda)$ by Lemma \ref{lem:F}. Hence $h\in N_n(g)\setminus PL$.
\end{remark}

\section{Consequences for Section 2 of \texorpdfstring{\cite{GW}}{[GW]}}\label{sec:GW}

Following \cite{GW}, let $H_n=\{g\in G: g[w]=[w]\text{ for every word $w$ with }|w|=n\}$, $L_n=L\cap H_n$ and $P_n=P\cap H_n$. The groups $H_n$ form a base of neighbourhoods of the identity consisting of clopen subgroups.
After the proof of \cite[Theorem 2.1]{GW} it is stated that $H_n=L_nP_n=P_nL_n$. What is true is that
\begin{equation}\label{eq:Hn}
H_n\cap PL=P_nL_n\qquad\text{and}\qquad H_n\cap LP=L_nP_n .
\end{equation}
Indeed, let $g=pl\in H_n$ with $p\in P$, $l\in L$, and let $J$ be the union of the first $i$ of the sets $[w]$, $|w|=n$, where $1\le i<2^n$. Then $\lambda(p^{-1}J)=\lambda(g^{-1}J)=\lambda(J)$, and since $p^{-1}J\in\cJ$ and $\lambda$ is injective on $\cJ$,
we get $p^{-1}J=J$. Hence $p\in H_n$ and $l=p^{-1}g\in H_n$. The second equality in \eqref{eq:Hn} follows by taking inverses. However, the equalities $H_n=P_nL_n$ and $H_n=L_nP_n$ fail:

\begin{proposition}\label{prop:Hn}
For every $n\ge1$ we have $H_n\not\subseteq PL$ and $H_n\not\subseteq LP$.
\end{proposition}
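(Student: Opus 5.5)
The plan is to plant a rescaled copy of the example $g_0$ from Section \ref{sec:example} inside one basic interval of length $n$, and to let $h$ be the identity on all other basic intervals. Fix $n\ge1$ and the word $w=0^n$. Define $h\in G$ by
\[
h(wy)=wg_0(y)\quad (y\in\cs),\qquad h=\id \text{ outside } [w].
\]
Since $g_0$ is a homeomorphism of $\cs$, $h$ is a homeomorphism, and $h[u]=[u]$ for all words $u$ with $|u|=n$. Hence $h\in H_n$. The claim $H_n\not\subseteq LP$ will then follow from $H_n\not\subseteq PL$ by inversion: $H_n$ is a group, and $LP=(PL)^{-1}$, so $h^{-1}\in H_n\setminus LP$ whenever $h\in H_n\setminus PL$.

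So it suffices to show $h\notin PL$. By Corollary \ref{cor:criterion}\ref{critC}, this means exhibiting a dyadic rational $d\in\D$ that is not of the form $\lambda(h^{-1}J)$ with $J\in\cJ$. The candidate is $d=2^{-n-1}$, the rescaled version of $\frac12$.

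The key steps are as follows. Let $\nu=h_*\lambda$.
\begin{enumerate}[label=\textup{(\roman*)}]
\item Since $[w]$ is the leftmost basic interval, any $J\in\cJ$ with $\nu(J)=2^{-n-1}<2^{-n}=\nu([w])$ must satisfy $J\subsetneq[w]$. Indeed, if $\max J\notin[w]$, then $J\supseteq[w]$ and $\nu(J)\ge 2^{-n}$.
\item For such $J$ we have $J=\{wt:t\le s\}$ for some $s\in E^-$, i.e.\ $J=wE_s$. Also $h^{-1}(wE_s)=w\,g_0^{-1}(E_s)$, so
\[
\lambda(h^{-1}J)=2^{-n}\,(g_0)_*\lambda(E_s).
\]
\item By Proposition \ref{prop:example}\ref{exA}, the value $(g_0)_*\lambda(E_s)$ never equals $\frac12$. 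Hence $\nu(J)\neq 2^{-n-1}$ for every $J\in\cJ$.
\end{enumerate}
Therefore $2^{-n-1}\in\D\setminus\St(\nu)$, and $h\notin PL$.

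Alternatively, one could reuse the argument of Remark \ref{rem:interior} with $g=\id$. That argument produces an element of $N_n(\id)=H_n$ outside $PL$. It also identifies the missing value through Lemma \ref{lem:F}: the point $w\xs$ is not a gap endpoint, and its distribution value is $2^{-n-1}$.

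The only step needing care is the localization in (i)–(ii): one must check that initial segments of $E$ whose measure is below $2^{-n}$ are exactly $w$ times initial segments of $\cs$, and that $\lambda$ scales by $2^{-n}$ on $[w]$. Both facts are immediate from the coding, so I do not expect a serious obstacle.
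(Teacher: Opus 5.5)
Your proof is correct and follows the paper's proof. You use the same element ($h(0^ny)=0^ng_0(y)$, identity off $[0^n]$), the same missing value $2^{-n-1}$, and the same inversion step for $LP$. The paper certifies $2^{-n-1}\notin\St(h_*\lambda)$ exactly as in your ``alternative'', via Lemma \ref{lem:F} at the non-gap point $0^n\xs$; your main route instead localizes initial segments of measure below $2^{-n}$ to $[0^n]$ and applies Proposition \ref{prop:example}\ref{exA}, which works equally well.
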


\begin{proof}
Let $g_0$ be the homeomorphism from Section \ref{sec:example} and define $g_n\in H_n$ by $g_n(0^ny)=0^ng_0(y)$ for $y\in\cs$ and $g_n(x)=x$ for $x\notin[0^n]$.
Then $g_n^{-1}(E_{0^n\xs})=\{0^ny:y\in g_0^{-1}(E_{\xs})\}$ has $\lambda$-measure $2^{-n}F_{(g_0)_*\lambda}(\xs)=2^{-n-1}$. Since $0^n\xs$ is not an endpoint of a gap, Lemma \ref{lem:F} shows that
$2^{-n-1}\notin\St(g_{n*}\lambda)$. Hence $g_n\notin PL$ and $g_n^{-1}\in H_n\setminus LP$.
\end{proof}

Next we turn to \cite[Proposition 2.2]{GW}. Let $\pi\colon G\to G/P$ be the quotient map onto the space of left cosets $gP$. Recall that $\Phi=\Phi(E)$ is the compact space of all maximal chains of closed subsets of $E$ and that
$c_0=\{E_t:t\in E\}\in\Phi$ is the chain of all nonempty closed initial segments; the stabilizer of $c_0$ in $G$ is $P$ \cite[Theorem 1.3]{GW}.
Proposition 2.2 of \cite{GW} states that (i) $\pi$ restricts to a homeomorphism of $L$ onto $G/P$, and (ii) the map $gP\mapsto gc_0$ of $G/P$ onto the orbit $Gc_0\subseteq\Phi$ is one-to-one and continuous but not a homeomorphism.
The proof of (i) uses the invalied statement \cite[Theorem 2.1]{GW}, and the proof of (ii) uses (i). The correct version is as follows.

\begin{proposition}\label{prop:22}
\begin{enumerate}[label=\textup{(\roman*)}]
\item\label{22i} The restriction of $\pi$ to $L$ is a homeomorphism of $L$ onto $\pi(L)=\pi(LP)$, which is a dense $G_\delta$ subset of $G/P$ different from $G/P$.
\item\label{22ii} The map $\beta\colon G/P\to Gc_0$, $\beta(gP)=gc_0$, is a continuous bijection which is not a homeomorphism.
\end{enumerate}
\end{proposition}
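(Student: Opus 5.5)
For (i) I would work with the continuous map $\pi|_L\colon L\to G/P$. It is injective because $lP=l'P$ gives $l^{-1}l'\in L\cap P=\{\id\}$ by Theorem~\ref{thm:main}\ref{main1}. Its image is $\pi(L)=\{lP:l\in L\}=\pi(LP)$. For continuity of the inverse, the natural route is through the multiplication homeomorphism $\psi\colon L\times P\to LP$ of Theorem~\ref{thm:main}\ref{main3}. The map $r=\mathrm{pr}_L\circ\psi^{-1}\colon LP\to L$ is continuous and constant on left cosets $gP\subseteq LP$, since $LP$ is a union of left $P$-cosets. Because $\pi$ is an open map and $LP=\pi^{-1}(\pi(LP))$ is saturated, the restriction $\pi\colon LP\to\pi(LP)$ is a quotient map, so $r$ descends to a continuous map $\pi(LP)\to L$. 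This map is the inverse of $\pi|_L$.

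Next I would show that $\pi(LP)$ is a dense $G_\delta$ in $G/P$. Density is immediate, because $LP$ is dense in $G$ and $\pi$ is a continuous surjection. For the $G_\delta$ property, I would use the fact that $G/P$ is Polish, $P$ being closed. I would also use that $LP=\bigcap_d V_d$ with $V_d=U_d^{-1}$ open, and that each $V_d$ is right $P$-invariant. The invariance holds because $\St(g^{-1}_*\lambda)$ is unchanged when $g$ is replaced by $gp$: indeed $(gp)^{-1}_*\lambda=p^{-1}_*g^{-1}_*\lambda$, and we apply \eqref{eq:Sinv}. Since $\pi$ is open and $V_d$ is saturated, $\pi(V_d)$ is open and $\pi(LP)=\bigcap_d\pi(V_d)$. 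Finally, $\pi(LP)\ne G/P$ because $LP\ne G$ and $LP$ is saturated; concretely, $g_0^{-1}P\notin\pi(LP)$ by Proposition~\ref{prop:example}.

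For (ii), $\beta$ is a well-defined bijection since $P$ is the stabilizer of $c_0$. It is continuous since $g\mapsto gc_0$ is continuous and $\pi$ is a quotient map. To show it is not a homeomorphism, I would adapt the idea of \cite{GW}, but using the honest subspace $\pi(L)$. I would take $l_k\in L$ with $l_kc_0\to c_0$ in $\Phi$ while $l_k\not\to\id$ in $L$, for instance $l_k$ swapping $[0^k1]$ with a distant piece of equal measure. A cleaner candidate is $l_k$ permuting the tail sets, e.g. exchanging $[0^k10]$ and $[0^k11]$ inside $[0^k]$ nontrivially. Then $l_kc_0$ differs from $c_0$ only on small initial segments, which are near $\{0\}$ in the Vietoris sense. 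Hence $l_kc_0\to c_0$.

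The main obstacle is showing that $l_kP\not\to P$ in $G/P$. I would argue by contradiction: if $l_kP\to P$, then by (i) $l_k\to\id$ in $L$, since $\pi|_L$ is a homeomorphism onto its image and $P=\pi(\id)$ lies in that image. But the $l_k$ above do converge to $\id$ uniformly, so this choice fails. I would therefore instead choose $l_k$ that move a fixed-size set while still satisfying $l_kc_0\to c_0$. For example, $l_k$ could map $E_{t_k}$-type segments so that the chain image nearly matches $c_0$: take $l_k$ that reverses the order on small blocks throughout all of $E$, at scale $2^{-k}$, together with a fixed nontrivial measure-preserving swap that is nevertheless close in $\Phi$. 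Verifying the Vietoris convergence in $\Phi$ for such a choice is the step I expect to need real care. An alternative is to use a non-$G_\delta$ argument: $Gc_0$ is not Polish, being not $G_\delta$ in $\Phi$, whereas $G/P$ is Polish. This would give (ii) directly, provided that fact about $Gc_0$ is available.
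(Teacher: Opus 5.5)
Part (i) is correct and follows the paper's proof essentially step by step. In (ii), bijectivity and continuity of $\beta$ are also fine. The gap is in showing that $\beta$ is not a homeomorphism.

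Your reduction is the right one: find $l_k\in L$ with $l_kc_0\to c_0$ but $l_k\not\to\id$, then use (i) to get $l_kP\not\to P$. However, you never produce such a sequence, and each candidate you list fails.

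\begin{itemize}
\item \textbf{Swapping $[0^k1]$ with a distant piece.} Recall the criterion from the paper (via \cite[Lemma 1.1]{GW}): $lc_0\in\mathcal U^m$ iff $\min l^{-1}(A_1)<\dots<\min l^{-1}(A_{2^m})$ for the level-$m$ blocks $A_i$. Suppose $l_k$ swaps $[0^k1]$ with a cylinder $Q_k$ that stays away from $\zinf$. Then for a suitable fixed $m$ and $k\ge m$ we have $Q_k\subseteq A_j$ with $j\ge3$, while $A_2=[0^{m-1}1]$ is untouched. Hence $\min l_k^{-1}(A_j)\le 0^k1\zinf<0^{m-1}1\zinf=\min l_k^{-1}(A_2)$, so $l_kc_0\notin\mathcal U^m$. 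If instead $Q_k\to\zinf$, then $l_k\to\id$. Closeness to $c_0$ in $\Phi$ is not about agreeing away from $\zinf$; it is about preserving the order of the first-entry points $\min l^{-1}(A_i)$.
\item \textbf{Block reversal combined with a fixed swap $s$.} This fails because $l_kc_0\to sc_0$, and $sc_0=c_0$ would force $s\in P\cap L=\{\id\}$.
\item \textbf{The non-$G_\delta$ route.} This is circular. By Effros' theorem, $Gc_0$ fails to be $G_\delta$ in $\Phi$ exactly when $\beta$ is not a homeomorphism, so invoking that fact assumes the claim.
\end{itemize}

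The missing idea is to move a piece of fixed size far in the metric while keeping it adjacent in the order, by placing the two swapped pieces across a gap. The paper takes the involution $l_n\in L$ exchanging $[01^n]$ and $[10^{n-1}1]$, which lie on the two sides of the central gap $(0\oinf,1\zinf)$.
\begin{itemize}
\item $l_n\notin H_1$, so $l_n\not\to\id$ in $L$.
\item For $n\ge m$, the only affected level-$m$ blocks are the adjacent blocks $[01^{m-1}]$ and $[10^{m-1}]$.
\item $\min l_n[01^{m-1}]=01^{m-1}\zinf$ is unchanged.
\item $\min l_n[10^{m-1}]=01^n\zinf$ lies strictly between $01^{m-1}\zinf$ and every point of $[1]$.
\end{itemize}
So the minima remain increasing, $l_nc_0\in\mathcal U^m$ for all $n\ge m$, and hence $l_nc_0\to c_0$ while $l_nP\not\to P$.
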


\begin{proof}
\ref{22i} Recall that $\pi$ is continuous and open. The set $LP$ is saturated, i.e., $LP=\pi^{-1}(\pi(LP))$, and $LP=\bigcap_{d\in\D}U_d^{-1}$, where the open sets $U_d^{-1}=\{g^{-1}:g\in U_d\}$ are dense and also saturated (note that $PU_d=U_d$ by \eqref{eq:PJ}).
Hence $\pi(LP)=\bigcap_{d}\pi(U_d^{-1})$ is a dense $G_\delta$ subset of $G/P$, and $\pi(g_0^{-1})\notin\pi(LP)$ by Proposition \ref{prop:example}. The restriction $\pi|_L$ is injective because $L\cap P=\{\id\}$.
By Theorem \ref{thm:main}\ref{main3}, the map $q\colon LP\to L$, $q(lp)=l$, is continuous. It is constant on the cosets $gP\subseteq LP$, and the restriction of $\pi$ to the saturated set $LP$ is an open map onto $\pi(LP)$.
Hence $q$ induces a continuous map $\bar q\colon\pi(LP)\to L$ with $\bar q\circ\pi=q$, and $\bar q$ is the inverse of $\pi|_L$.

\ref{22ii} The map $\beta$ is a bijection because $P$ is the stabilizer of $c_0$, and it is continuous because $g\mapsto gc_0$ is. For $n\ge1$ let $l_n\in G$ be the involution which exchanges the sets $[01^n]$ and $[10^{n-1}1]$,
\[
l_n(01^ny)=10^{n-1}1y,\qquad l_n(10^{n-1}1y)=01^ny\qquad(y\in\cs),
\]
and which is the identity elsewhere. Then $l_n\in L$ and $l_n\notin H_1$, so $l_n\not\to\id$ in $L$, and by \ref{22i} the cosets $l_nP$ do not converge to $P$ in $G/P$. We claim that nevertheless $\beta(l_nP)=l_nc_0\to c_0=\beta(P)$ in $\Phi$.
For $m\ge1$ let $A_1<\dots<A_{2^m}$ be the sets $[w]$, $|w|=m$. By \cite[Lemma 1.1]{GW}, a base of neighbourhoods of $c_0$ in $\Phi$ is formed by the sets $\mathcal U^m$, $m\ge1$ (denoted by $U^m_{\id}$ in \cite{GW}), where $c\in\mathcal U^m$ if and only if
the sets $\{i:F\cap A_i\neq\emptyset\}$, $F\in c$, are exactly the initial segments $\{1,\dots,j\}$, $1\le j\le 2^m$. For $l\in G$, the member $l(E_t)$ of the chain $lc_0$ meets $A_i$ if and only if $t\ge\min l^{-1}(A_i)$. Hence
\[
lc_0\in\mathcal U^m\quad\Longleftrightarrow\quad \min l^{-1}(A_1)<\min l^{-1}(A_2)<\dots<\min l^{-1}(A_{2^m}).
\]
Let $n\ge m$. Then $l_n^{-1}(A_i)=l_n(A_i)=A_i$ unless $A_i$ is one of the two adjacent sets $[01^{m-1}]$, $[10^{m-1}]$. Further, $\min l_n[01^{m-1}]=01^{m-1}\zinf=\min\,[01^{m-1}]$ (this point does not belong to $[01^n]$ because $n\ge m$), while
$\min l_n[10^{m-1}]=01^n\zinf$, which is greater than $01^{m-1}\zinf$ and smaller than every element of $[1]$. Thus the minima are still increasing and $l_nc_0\in\mathcal U^m$ for all $n\ge m$.
\end{proof}

\section{Double cosets}\label{sec:cosets}

In this section we prove Theorem \ref{thm:cosets}. We need the notion of a good measure, which was introduced by Akin \cite{Ak2}. A full nonatomic measure $\mu$ on a Cantor space $X$ is called \emph{good} if for any clopen sets $U,V\subseteq X$ with $\mu(U)<\mu(V)$ there exists a clopen set
$W\subseteq V$ with $\mu(W)=\mu(U)$. In other words, every value which $\mu$ takes on clopen sets and which is smaller than $\mu(V)$ is already taken on some clopen subset of $V$.
Good measures are exactly the invariant measures of uniquely ergodic minimal homeomorphisms of Cantor spaces \cite{Ak2}. By \ref{clopC}, the Cantor--Lebesgue measure $\lambda$ is good, and so is $g_*\lambda$ for every $g\in G$.
The importance of good measures comes from the fact that they are classified up to homeomorphism by their clopen values sets. We shall use this theorem of Akin \cite[Theorem 2.9]{Ak2} in the following form:
\emph{if $\mu$ and $\nu$ are good measures on $E$ and $S(\mu)=S(\nu)$, then $\nu=h_*\mu$ for some $h\in G$.}

For a full nonatomic measure $\nu$ and $c\in\St(\nu)\cup\{0,1\}$ let $J_c$ denote the unique element of $\cJ_0$ with $\nu(J_c)=c$ (see Lemma \ref{lem:F}\ref{FB}). By \ref{clopB}, the nonempty clopen subsets of $E$ are exactly the sets
\begin{equation}\label{eq:W}
W=\bigcup_{i=1}^k(J_{b_i}\setminus J_{a_i}),\qquad a_1<b_1\le a_2<b_2\le\dots\le a_k<b_k\ \text{ in }\St(\nu)\cup\{0,1\},
\end{equation}
we have $\nu(W)=\sum_{i}(b_i-a_i)$, and $W\subseteq J_b\setminus J_a$ if and only if $a\le a_1$ and $b_k\le b$. Therefore, with the notation of the introduction and $S=\St(\nu)$,
\begin{equation}\label{eq:Sigma}
\{\nu(W): W\subseteq J_b\setminus J_a\text{ nonempty clopen}\}=\Sigma_S(a,b)\qquad(a<b\text{ in }S\cup\{0,1\}).
\end{equation}

\begin{lemma}\label{lem:adm}
Let $\nu$ be a full nonatomic measure on $E$. Then $\nu=h_*\lambda$ for some $h\in G$ if and only if $\St(\nu)$ is admissible.
\end{lemma}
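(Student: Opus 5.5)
We prove both directions by reducing to the identity \eqref{eq:Sigma} and to Akin's classification of good measures by clopen values sets. Throughout let $S=\St(\nu)$ and, for $c\in S\cup\{0,1\}$, let $J_c\in\cJ_0$ be the set with $\nu(J_c)=c$. By Lemma \ref{lem:F}\ref{FB}, $S$ is automatically a countable dense subset of $(0,1)$. So admissibility of $S$ amounts to two conditions: $S\subseteq\D$, and the inclusion \eqref{eq:adm}.

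\emph{Necessity.} Suppose $\nu=h_*\lambda$. Then $S\subseteq\D$ by Corollary \ref{cor:criterion}\ref{critB}, and $S$ is dense as just noted. Now fix $a<b$ in $S\cup\{0,1\}$ and put $V=J_b\setminus J_a$. This is a nonempty clopen set with $\nu(V)=b-a$, and it is a dyadic rational. Let $d\in\D\cap(0,b-a)$. By \ref{clopC}, the set $h^{-1}V$ has a clopen subset of $\lambda$-measure $d$. Its image under $h$ is a clopen $W\subseteq V$ with $\nu(W)=d$. By \eqref{eq:Sigma}, $d\in\Sigma_S(a,b)$, which is \eqref{eq:adm}.

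\emph{Sufficiency.} Suppose $S$ is admissible. We show that $\nu$ is a good measure with $S(\nu)=\D\cup\{0,1\}=S(\lambda)$; Akin's theorem then gives $h\in G$ with $\nu=h_*\lambda$, since $\lambda$ is good.

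\begin{enumerate}[label=\textup{(\roman*)}]
\item By \eqref{eq:W}, every clopen value of $\nu$ is a finite sum of differences of elements of $S\cup\{0,1\}$. Since $S\subseteq\D$, this gives $S(\nu)\subseteq\D\cup\{0,1\}$.
\item Applying \eqref{eq:adm} with $a=0$, $b=1$, together with \eqref{eq:Sigma}, shows that every $d\in\D$ is a clopen value. Hence $S(\nu)=\D\cup\{0,1\}$.
\item For goodness, let $U,V$ be clopen with $\nu(U)<\nu(V)$. We may assume $U\neq\emptyset$, so $\nu(U)\in\D$. Write $V$ as in \eqref{eq:W}, that is, as a disjoint union of intervals $J_{b_i}\setminus J_{a_i}$ with total measure $\sum(b_i-a_i)>\nu(U)$.
\item Choose the least $j$ with $\sum_{i\le j}(b_i-a_i)\ge\nu(U)$. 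Take the full intervals $J_{b_i}\setminus J_{a_i}$ for $i<j$.
\item The remainder $r=\nu(U)-\sum_{i<j}(b_i-a_i)$ is dyadic and satisfies $0<r\le b_j-a_j$. If $r=b_j-a_j$, take the whole $j$-th interval. Otherwise apply \eqref{eq:adm} to the pair $a_j<b_j$ and use \eqref{eq:Sigma} to obtain a clopen subset of $J_{b_j}\setminus J_{a_j}$ of measure exactly $r$.
\item The union $W$ of these pieces is a clopen subset of $V$ with $\nu(W)=\nu(U)$.
\end{enumerate}

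The main obstacle is this goodness step. Admissibility only gives subsets of single intervals with endpoints in $S$, while a general $V$ is a union of several such intervals. The greedy splitting in (iv)–(v) handles this, since every partial remainder is dyadic and lies strictly inside a single interval's measure range. One should also double-check that the form in which Akin's theorem is quoted requires only goodness and equality of clopen values sets, which holds here.
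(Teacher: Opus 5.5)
Your proof is correct and follows the paper's argument essentially step for step. For necessity you apply \ref{clopC} directly to $h^{-1}V$, which is the same as the paper's appeal to the goodness of $h_*\lambda$, and then use \eqref{eq:Sigma}. For sufficiency you compute $S(\nu)=\D\cup\{0,1\}$, prove goodness by the same greedy splitting of $\nu(U)$ over the disjoint intervals $J_{b_i}\setminus J_{a_i}$ (the paper's recursively defined $r_i$), and finish with Akin's theorem in exactly the form the paper quotes, which needs only that both measures are good and have equal clopen values sets.
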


\begin{proof}
Let $S=\St(\nu)$. Assume that $\nu=h_*\lambda$. Then $S\subseteq\D$ by Corollary \ref{cor:criterion}\ref{critB}, and $S$ is dense by Lemma \ref{lem:F}\ref{FB}. Let $a<b$ in $S\cup\{0,1\}$ and let $r\in\D$, $r<b-a$.
Then $r\in S(\lambda)=S(\nu)$ and $r<\nu(J_b\setminus J_a)$. As $\nu$ is good, there is a clopen set $W\subseteq J_b\setminus J_a$ with $\nu(W)=r$, and $r\in\Sigma_S(a,b)$ by \eqref{eq:Sigma}.

Conversely, let $S$ be admissible. By \eqref{eq:Sigma} with $a=0$, $b=1$ we have $S(\nu)=\Sigma_S(0,1)\cup\{0\}$. Since $S\subseteq\D$, this set is contained in $\D\cup\{0,1\}$, and it contains $\D$ by \eqref{eq:adm}. Hence $S(\nu)=\D\cup\{0,1\}=S(\lambda)$.
We show that $\nu$ is good. Let $U,V$ be clopen with $\nu(U)<\nu(V)$ and write $V$ in the form \eqref{eq:W}. Put $r=\nu(U)$; this is a dyadic rational. Define $r_1,\dots,r_k$ recursively by $r_i=\min\{r-r_1-\dots-r_{i-1},\,b_i-a_i\}$.
Then each $r_i$ is a dyadic rational, $0\le r_i\le b_i-a_i$ and $\sum_ir_i=r$. For each $i$ there is a clopen set $W_i\subseteq J_{b_i}\setminus J_{a_i}$ with $\nu(W_i)=r_i$: this is trivial if $r_i=0$ or $r_i=b_i-a_i$, and
otherwise it follows from \eqref{eq:adm} and \eqref{eq:Sigma}. Then $W=\bigcup_iW_i\subseteq V$ and $\nu(W)=r$. Thus $\nu$ is good, and by Akin's theorem quoted above, $\nu=h_*\lambda$ for some $h\in G$.
\end{proof}

The next lemma is a special case of \cite[Theorem 3.2]{Ak1}, so we skip the proof.

\begin{lemma}\label{lem:exist}
For every countable dense set $S\subseteq(0,1)$ there is a full nonatomic measure $\nu$ on $E$ with $\St(\nu)=S$.
\end{lemma}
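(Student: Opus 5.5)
The plan is to obtain $\nu$ by distorting the Cantor--Lebesgue measure through an increasing homeomorphism of $[0,1]$ that carries $\D$ onto $S$. Both $\D$ and $S$ are countable dense subsets of $(0,1)$, so they are countable dense linear orders without endpoints. Cantor's back-and-forth theorem then gives an order isomorphism $\varphi\colon\D\to S$. Since both sets are dense in $[0,1]$, $\varphi$ extends uniquely to an increasing homeomorphism $\hat\varphi\colon[0,1]\to[0,1]$. I will then set $F=\hat\varphi\circ F_\lambda\colon E\to[0,1]$ and look for a measure $\nu$ with $F_\nu=F$.

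By Lemma \ref{lem:F} applied to $\lambda$, $F$ has the following properties:
\begin{itemize}
\item it is continuous, nondecreasing and onto $[0,1]$;
\item for $s<t$ we have $F(s)=F(t)$ if and only if $(s,t)$ is a gap;
\item its fibres are two-point sets exactly over $\hat\varphi(\D)=S$.
\end{itemize}

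To construct $\nu$, define a section $\gamma\colon[0,1]\to E$ by $\gamma(c)=\min F^{-1}(c)$. The map $\gamma$ is strictly increasing, hence Borel. Let $\nu=\gamma_*m$, where $m$ is Lebesgue measure on $[0,1]$. Since $F\circ\gamma=\id$ and $F$ is nondecreasing, for $t\in E$ we have $\gamma(c)\le t$ iff $c\le F(t)$, with the only possible exception $c=F(t)$. Hence $\nu(E_t)=m([0,F(t)])=F(t)$, so $F_\nu=F$.

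It remains to check the required properties of $\nu$.
\begin{enumerate}[label=\textup{(\roman*)}]
\item Nonatomic: $\nu(\{x\})\le m(\{F(x)\})=0$.
\item Full: every nonempty open set contains some $[w]$, and $\nu([w])=F(\max[w])-F(\min[w]^-)$, understood as the length of the interval $\hat\varphi\bigl(F_\lambda([w])\bigr)$. This is positive, because $F_\lambda([w])$ is a nondegenerate interval of length $2^{-|w|}$ and $\hat\varphi$ is a homeomorphism. Equivalently, $\nu([w])=m(F([w]))$ up to null sets.
\item Special clopen values set: $\St(\nu)=F_\nu(E^-)=\hat\varphi(F_\lambda(E^-))=\hat\varphi(\D)=S$, using $F_\lambda(E^-)=\St(\lambda)=\D$ from \ref{clopA}.
\end{enumerate}

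The step I expect to need the most care is the identity $F_\nu=F$, namely verifying that the pushforward under the section $\gamma$ has the prescribed distribution function. It reduces to the observation that $\{c:\gamma(c)\le t\}$ and $[0,F(t)]$ differ by at most one point. The remaining steps are routine consequences of Lemma \ref{lem:F} and of the fact that $\hat\varphi$ is an increasing homeomorphism.
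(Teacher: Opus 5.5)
Your argument is correct. It differs from the paper only in that the paper gives no proof here: it skips the lemma as a special case of Akin's theorem \cite[Theorem 3.2]{Ak1}. You instead give a self-contained construction in three steps:
\begin{itemize}
\item Cantor's back-and-forth theorem gives an increasing homeomorphism $\hat\varphi$ of $[0,1]$ carrying $\D$ onto $S$.
\item You take $F=\hat\varphi\circ F_\lambda$ as the prescribed distribution function; it inherits all the properties in Lemma \ref{lem:F}.
\item You realize $F$ as $F_\nu$ for $\nu=\gamma_*m$, using the quantile section $\gamma(c)=\min F^{-1}(c)$.
\end{itemize}

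Every step checks out. Three small clean-ups:
\begin{itemize}
\item In fact $\{c:\gamma(c)\le t\}=[0,F(t)]$ exactly. For $c=F(t)$ we have $\gamma(c)\le t$ because $t\in F^{-1}(c)$, so no exceptional point needs to be discussed.
\item ``Strictly increasing, hence Borel'' is justified by noting that the preimage of each convex set $[w]$ under a monotone map is an interval, and every open set is a countable union of such $[w]$.
\item In (ii), the expression $F(\min[w]^-)$ should simply read $F(w\zinf)$. Nonatomicity gives $\nu([w])=F(w\oinf)-F(w\zinf)=\hat\varphi(\alpha+2^{-|w|})-\hat\varphi(\alpha)>0$ with $\alpha=F_\lambda(w\zinf)$.
\end{itemize}

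The citation buys the paper brevity and places the lemma inside Akin's general theory of measures on ordered Cantor spaces. Your route buys a short elementary proof that uses only Lemma \ref{lem:F} and \ref{clopA}.
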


\begin{proof}[Proof of Theorem \ref{thm:cosets}]
By Corollary \ref{cor:criterion}\ref{critD}, the map $PgL\mapsto\St(g_*\lambda)$ is well defined and injective, and by Lemma \ref{lem:adm} its values are admissible sets. If $S$ is admissible, then
by Lemma \ref{lem:exist} there is a full nonatomic measure $\nu$ with $\St(\nu)=S$, and by Lemma \ref{lem:adm}, $\nu=h_*\lambda$ for some $h\in G$. Hence the map is onto the family of admissible sets. The coset $PL$ corresponds to $\St(\lambda)=\D$.

Let $R\subseteq\D$ have no accumulation point in $(0,1)$, and let $S=\D\setminus R$. Then $S$ is dense in $(0,1)$.
Let $a<b$ in $S\cup\{0,1\}$ and let $r\in\D$, $r<b-a$. Fix real numbers $0<\delta_1<\delta_2<\min\{r,b-a-r\}$. For $\delta\in[\delta_1,\delta_2]$ the points $a+r-\delta$ and $b-\delta$ lie in the compact sets
$[a+r-\delta_2,a+r-\delta_1]\subseteq(a,a+r)$ and $[b-\delta_2,b-\delta_1]\subseteq(a+r,b)$, respectively. These sets contain only finitely many points of $R$, so there is a dyadic rational $\delta\in[\delta_1,\delta_2]$
such that neither $a+r-\delta$ nor $b-\delta$ belongs to $R$. These two numbers are dyadic rationals in $(0,1)$, hence they belong to $S$, and
\[
a<a+r-\delta<b-\delta<b,\qquad (a+r-\delta-a)+(b-(b-\delta))=r .
\]
Thus $r\in\Sigma_S(a,b)$ and $S$ is admissible.

There are $2^{\aleph_0}$ sets $R\subseteq\{2^{-k}:k\ge1\}$, and they yield pairwise different admissible sets $\D\setminus R$. Since $|G|=2^{\aleph_0}$, the number of double cosets is exactly $2^{\aleph_0}$.
Finally, every double coset different from $PL$ is disjoint from the comeager set $PL$ (Theorem \ref{thm:main}\ref{main3}), hence it is meager.
\end{proof}

\section{Topological generators}\label{sec:gen}

In this section we prove Theorem \ref{thm:gen}.

\subsection{Thompson's groups}
Let $V$ be the set of all $g\in G$ for which there exist $n\ge1$ and words $a_1,\dots,a_n$, $b_1,\dots,b_n$ such that both $\{[a_1],\dots,[a_n]\}$ and $\{[b_1],\dots,[b_n]\}$ are partitions of $E$ and
\[
g(a_iy)=b_iy\qquad(i\le n,\ y\in\cs).
\]
The group $V\le G$ is isomorphic to the original Thompson group $V$ (see \cite[Section 2]{BQ}).
We say that such a $g$ is given by \emph{prefix replacement}: on each of the sets $[a_i]$ it replaces the prefix $a_i$ by $b_i$ and leaves the rest of the sequence unchanged. In the definition we may always pass to finer partitions (replace $a_i$ by $a_i0,a_i1$ and $b_i$ by $b_i0,b_i1$).
Since $g$ is determined by the finitely many words $a_i$ and $b_i$, the group $V$ is countable.
Traditionally, Thompson's groups are defined by means of the dyadic rationals: $V$ is introduced as the group of all right-continuous bijections of $[0,1)$ which map dyadic rationals to dyadic rationals and which are piecewise linear with finitely many pieces,
dyadic breakpoints and slopes which are powers of $2$, see \cite[Section 6]{CFP}. Such a bijection is discontinuous at finitely many dyadic rationals; it becomes a homeomorphism when every dyadic rational of $(0,1)$ is doubled, and this leads to the description by prefix replacement given above, which is much cleaner.
The elements of $V$ for which the words can be chosen so that $[a_1]<\dots<[a_n]$ and $[b_1]<\dots<[b_n]$ form Thompson's group $F=V\cap P$.
%here $A<B$ means that every element of $A$ is smaller than every element of $B$.
%Under the identification of $E$ with the unit interval in which every dyadic rational of $(0,1)$ is doubled, $F$ is the group of all piecewise linear homeomorphisms of $[0,1]$ with dyadic breakpoints and slopes which are powers of $2$.
An element of $V$ preserves $\lambda$ if and only if $|a_i|=|b_i|$ for all $i$. In particular, if $\sigma$ is a permutation of the set of all words of length $n$, then
\[
uy\mapsto\sigma(u)y\qquad(|u|=n,\ y\in\cs)
\]
is an element of $V\cap L$, which we denote by $\sigma$ again. We write $(u_1\ u_2\ \dots\ u_k)$ for the cyclic permutation $u_1\mapsto u_2\mapsto\dots\mapsto u_k\mapsto u_1$ which fixes all the other words of the same length.

\begin{lemma}\label{lem:Vdense}
The group $V$ is dense in $G$.
\end{lemma}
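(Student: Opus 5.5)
We use the neighbourhood base of Lemma \ref{lem:top}\ref{topB}. Fix $g\in G$ and $n\ge1$. We will produce an element $h\in V\cap N_n(g)$; since the sets $N_n(g)$ form a base of neighbourhoods of $g$, this shows that $V$ is dense.

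First, let $A_1,\dots,A_m$ with $m=2^n$ be the sets $[w]$ with $|w|=n$, and put $C_i=g^{-1}(A_i)$. The sets $C_1,\dots,C_m$ are nonempty clopen sets forming a partition of $E$. Every clopen set is a finite union of cylinders, so by compactness we can choose $N\ge n$ such that each $C_i$ is a union of cylinders $[u]$ with $|u|=N$. Thus each $C_i$ is a disjoint union of finitely many cylinders $[a_{i,1}],\dots,[a_{i,k_i}]$ with $k_i\ge1$.

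Next, we partition each $A_i=[w_i]$ into exactly $k_i$ cylinders. Any cylinder $[w]$ can be split into $k$ cylinders for every $k\ge1$: for instance take $[w1],[w01],\dots,[w0^{k-2}1],[w0^{k-1}]$, or repeatedly replace one cylinder $[z]$ by the two cylinders $[z0],[z1]$. This gives words $b_{i,1},\dots,b_{i,k_i}$ extending $w_i$ such that the cylinders $[b_{i,j}]$ form a partition of $A_i$.

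Now define $h$ by prefix replacement, $h(a_{i,j}y)=b_{i,j}y$. The families $\{[a_{i,j}]\}$ and $\{[b_{i,j}]\}$ are both partitions of $E$ indexed by the same set, so $h\in V$. By construction $h(C_i)=A_i$ for every $i$, that is, $h^{-1}[w]=g^{-1}[w]$ for all words $w$ of length $n$. Hence $h\in N_n(g)$, which proves the lemma.

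The argument has no real obstacle. The only point requiring care is that the number of pieces on the two sides must match within each block $i$, and this is handled by the freedom to split $A_i$ into any prescribed number of cylinders.
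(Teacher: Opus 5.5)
Your proposal is correct and follows essentially the same route as the paper: decompose each $g^{-1}[w]$ and the corresponding $[w]$ into the same number of cylinders, then take the prefix-replacement map $v\in V$. That map lies in $N_n(g)$, and Lemma~\ref{lem:top}\ref{topB} gives density. The paper matches the counts by noting that the number of cylinders on either side can be increased by one, whereas you fix a uniform depth for the $C_i$ and split only the $A_i$; this is a cosmetic difference.
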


\begin{proof}
Let $g\in G$ and $n\ge1$. For every word $w$ of length $n$ the clopen set $g^{-1}[w]$ is the union of finitely many pairwise disjoint sets of the form $[a]$, and the number of these sets can be increased by one (replace $[a]$ by $[a0]$ and $[a1]$); the same is true for $[w]$.
Hence we can write $g^{-1}[w]=[a_{w,1}]\cup\dots\cup[a_{w,k_w}]$ and $[w]=[b_{w,1}]\cup\dots\cup[b_{w,k_w}]$, both unions being disjoint. The map $v$ given by $v(a_{w,i}y)=b_{w,i}y$ for $|w|=n$, $i\le k_w$ and $y\in\cs$ belongs to $V$ and
satisfies $v(g^{-1}[w])=[w]$ for every $w$ of length $n$, that is, $v\in N_n(g)$. By Lemma \ref{lem:top}\ref{topB}, $V$ is dense in $G$.
\end{proof}

\subsection{Proof of Theorem \ref{thm:gen}}
Recall that
\[
p(0y)=00y,\qquad p(10y)=01y,\qquad p(11y)=1y\qquad(y\in\cs)
\]
and $l=(00\ \,01\ \,11\ \,10)$. Clearly $p\in F\subseteq P$ and $l\in V\cap L$ has order $4$. Let $\Gamma$ be the subgroup of $G$ generated by $p$ and $l$; thus $\Gamma\subseteq V$. By Lemma \ref{lem:Vdense} it suffices to show that $\Gamma=V$. We do it in three steps.

\smallskip\noindent\emph{Step 1: the element $a=pl^2p$.} We have $l^2=(00\ \,11)(01\ \,10)$. Hence, for $y\in\cs$,
\begin{align*}
0y&\ \overset{p}{\longmapsto}\ 00y\ \overset{l^2}{\longmapsto}\ 11y\ \overset{p}{\longmapsto}\ 1y, &
10y&\ \overset{p}{\longmapsto}\ 01y\ \overset{l^2}{\longmapsto}\ 10y\ \overset{p}{\longmapsto}\ 01y,\\
110y&\ \overset{p}{\longmapsto}\ 10y\ \overset{l^2}{\longmapsto}\ 01y\ \overset{p}{\longmapsto}\ 001y, &
111y&\ \overset{p}{\longmapsto}\ 11y\ \overset{l^2}{\longmapsto}\ 00y\ \overset{p}{\longmapsto}\ 000y .
\end{align*}
Thus $a\in\Gamma$ is given by
\begin{equation}\label{eq:a}
a(0y)=1y,\qquad a(10y)=01y,\qquad a(110y)=001y,\qquad a(111y)=000y ,
\end{equation}
and $a\in L$.

\smallskip\noindent\emph{Step 2: $\Gamma$ contains every permutation of the words of length $2$.} 
For a word $u$ of length $2$ let $s_u=(u0\ \,u1)$ be the transposition of the two halves of $[u]$. The four involutions $s_u$ commute, and
\begin{equation}\label{eq:conj}
\sigma s_u\sigma^{-1}=s_{\sigma(u)}\qquad\text{for every permutation $\sigma$ of the words of length $2$.}
\end{equation}
By \eqref{eq:a}, $a=\alpha s_{11}$ with $\alpha=(00\ \,10\ \,01\ \,11)$, and $la=\beta s_{11}$ with $\beta=l\alpha=(01\ \,10\ \,11)$.
Applying \eqref{eq:conj} in the form $\sigma s_u=s_{\sigma(u)}\sigma$ repeatedly, we get $(\sigma s_u)^k=s_{\sigma(u)}s_{\sigma^2(u)}\cdots s_{\sigma^k(u)}\,\sigma^k$ for $k\ge1$. Since $\alpha^4=\beta^3=\id$, this gives
\[
a^4=s_{00}s_{01}s_{10}s_{11},\qquad (la)^3=s_{01}s_{10}s_{11},\qquad\text{and so}\qquad s_{00}=(la)^3a^4\in\Gamma .
\]
By \eqref{eq:conj} with $\sigma=l,l^2,l^3$ we obtain $s_{01},s_{11},s_{10}\in\Gamma$, hence also $\alpha=as_{11}\in\Gamma$.
Finally, the permutations $l$ and $\alpha$ of the words of length $2$ generate the full symmetric group on these four words: indeed, the group they generate contains the element $l$ of order $4$ and the element $l\alpha=\beta$ of order $3$, so its order is divisible by $12$, and it contains the odd permutation $l$, so it is not the alternating group.

\smallskip\noindent\emph{Step 3: $\Gamma=V$.} By \cite[Theorem 1.2]{BQ}, the group $V$ is generated by the three elements
\[
(00\ \,01),\qquad(01\ \,10\ \,11)\qquad\text{and}\qquad(1\ \,00),
\]
where $(1\ \,00)$ denotes the transposition of the sets $[1]$ and $[00]$ by prefix replacement, that is, $1y\mapsto00y$, $00y\mapsto1y$ and $01y\mapsto01y$ for $y\in\cs$.
The first two elements are permutations of the words of length $2$, so they belong to $\Gamma$ by Step 2, and so does $\varrho=(00\ \,10)(01\ \,11)$; note that $\varrho(0y)=1y$ and $\varrho(1y)=0y$.
Since $p$ maps $[0]$ onto $[00]$ and $[10]$ onto $[01]$ by prefix replacement, $\pi_0=p^{-1}(00\ \,01)\,p$ is the transposition of the sets $[0]$ and $[10]$ by prefix replacement. As $\varrho$ maps $[0]$ onto $[1]$ and $[10]$ onto $[00]$, we get
\[
(1\ \,00)=\varrho\,\pi_0\,\varrho=\varrho\,p^{-1}(00\ \,01)\,p\,\varrho\in\Gamma .
\]
Hence $\Gamma=V$. \qed

\section{Further results and questions}\label{sec:remarks}

\subsection{Generic homeomorphisms}
Each of the groups $G$, $P$ and $L$ has a comeager conjugacy class, see \cite[Theorem 1.5]{KR}, \cite{Tr} and \cite[Theorem 1.12]{KR}, respectively. We denote these classes by $\mathcal C_G$, $\mathcal C_P$ and $\mathcal C_L$. Recall that every $g\in PL$ has a unique decomposition $g=p_gl_g$ with $p_g\in P$ and $l_g\in L$.

\begin{proposition}\label{prop:generic}
\begin{enumerate}[label=\textup{(\alph*)}]
\item\label{genA} The set of all $g\in PL\cap\mathcal C_G$ such that $p_g\in\mathcal C_P$ and $l_g\in\mathcal C_L$ is comeager in $G$.
\item\label{genB} There are $p\in\mathcal C_P$ and $l\in\mathcal C_L$ such that $pl\notin\mathcal C_G$.
\end{enumerate}
\end{proposition}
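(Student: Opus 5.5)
For part (a) I will transport comeagerness through the homeomorphism $\varphi\colon P\times L\to PL$, $\varphi(p,l)=pl$, from Theorem~\ref{thm:main}\ref{main3}. Since $\mathcal C_P$ is comeager in $P$ and $\mathcal C_L$ is comeager in $L$, the product $\mathcal C_P\times\mathcal C_L$ is comeager in $P\times L$; this holds for products of Polish spaces, e.g.\ by Kuratowski--Ulam. Hence $\varphi(\mathcal C_P\times\mathcal C_L)$ is comeager in $PL$. Moreover $PL$ is a dense $G_\delta$ subset of the Polish group $G$, so it is Polish, and a subset of $PL$ that is comeager in $PL$ is comeager in $G$. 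Indeed, if $A\subseteq PL$ contains a dense $G_\delta$ subset $B$ of $PL$, then $B$ is $G_\delta$ in $G$ and dense in $G$. So $\{g\in PL:p_g\in\mathcal C_P,\ l_g\in\mathcal C_L\}$ is comeager in $G$. Intersecting it with the comeager set $\mathcal C_G$ gives a comeager set, which is exactly the set in \ref{genA}.

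For part (b) the plan is to find $p\in\mathcal C_P$ and $l\in\mathcal C_L$ whose product has an "unusual" property that a generic homeomorphism lacks. A natural candidate is the following. Fix $p\in\mathcal C_P$. The set $\mathcal C_L$ is a conjugacy class in $L$ and is comeager, and the map $l\mapsto pl$ is a homeomorphism of $L$ onto $pL$. I would like to find, inside the comeager set $\mathcal C_L$, an element $l$ such that $pl$ fails some generic property of $G$. Concretely I would aim for $pl$ to have a fixed clopen set or a periodic clopen set of a specific kind that violates the explicit description of the generic homeomorphism in \cite{AGW}, or for $pl$ to be conjugate to a non-generic element.

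The cleanest route seems to be the following. The class $\mathcal C_G$ is invariant under conjugation in $G$, and by Corollary~\ref{cor:conj} every element of $G$ is conjugate to an element of $PL$. So take any $h\notin\mathcal C_G$, for instance $h=\id$ or a homeomorphism with no dense orbit behaviour matching the generic one. By Corollary~\ref{cor:conj}, conjugate $h$ into $PL$ to get $h'=p'l'$. Then $h'\notin\mathcal C_G$, but $p'$ and $l'$ need not be generic. To fix this, I will exploit freedom in the choice. The set of $(p,l)$ with $pl$ conjugate to a fixed non-generic $h$ is $\varphi^{-1}(\{ghg^{-1}\}\cap PL)$, and I want it to meet $\mathcal C_P\times\mathcal C_L$.

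The alternative, which I expect to be more tractable, is to use that $\mathcal C_P$ is closed under $P$-conjugation and $\mathcal C_L$ under $L$-conjugation, and to pick $q\in P$ with $p$ and $q p q^{-1}$ both generic. Then $(q p q^{-1})(q l q^{-1})$ is a conjugate of $pl$, but $q l q^{-1}$ is generally not in $L$, so this alone is not enough. The main obstacle is exactly producing a non-generic product from generic factors. My first attempt would be to take $l=p^{-1}k$ with $k$ chosen so that $l\in\mathcal C_L$ and $pl=k$ is non-generic. This requires showing that $p^{-1}\cdot\{\text{non-generic }k\}\cap\mathcal C_L\neq\emptyset$ for some generic $p$. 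Since the set of non-generic elements of $G$ is meager, this needs a specific construction, e.g.\ $k$ of finite order, which is certainly not in $\mathcal C_G$ because the generic homeomorphism has infinite order. So I will try to show that there exist $p\in\mathcal C_P$ and an involution $k$ with $p^{-1}k\in\mathcal C_L$. One way is to take $k=l_0$ an involution in $L$ conjugating some $l\in\mathcal C_L$ to $l^{-1}$, which exists in measure-preserving settings, and then realize $p$ accordingly. This last step is where I expect the real difficulty to lie.
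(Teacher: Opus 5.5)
Your argument for (a) is correct and is essentially the paper's. The paper pulls $\mathcal C_G\cap PL$ back by $\varphi$, intersects with $\mathcal C_P\times\mathcal C_L$ inside $P\times L$, and pushes forward; this is the same computation done in a different order. Kuratowski--Ulam is not needed: if $\mathcal C_P\supseteq\bigcap_n U_n$ and $\mathcal C_L\supseteq\bigcap_n V_n$ with $U_n,V_n$ dense open, then $\mathcal C_P\times\mathcal C_L$ contains $\bigcap_n (U_n\times L)\cap(P\times V_n)$.

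For (b) there is a genuine gap. You list strategies but carry none of them out, and the concrete one you settle on cannot work. If $k\in L$ and $l=p^{-1}k\in L$, then $p=kl^{-1}\in P\cap L=\{\id\}$. But $\id\notin\mathcal C_P$, since a comeager conjugacy class of the non-discrete Polish group $P$ cannot be $\{\id\}$. Likewise, conjugating $h=\id$ into $PL$ only yields $\id=\id\cdot\id$, by uniqueness of the decomposition. For a general non-generic $h$, Corollary~\ref{cor:conj} gives no control over whether the two factors are generic.

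The missing idea is to find a closed, conjugation-invariant property that elements of $\mathcal C_G$ avoid and that you can force by moving $l$ inside its $L$-conjugacy class. Having a fixed point works.

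\begin{itemize}
\item No element of $\mathcal C_G$ has a fixed point. The paper takes this from the explicit description of the generic homeomorphism. Alternatively, the set of homeomorphisms with a fixed point is closed, conjugation invariant, and has dense complement, so it misses the comeager class $\mathcal C_G$.
\item $L$ acts transitively on ordered pairs of distinct points of $E$. Permute the cylinders of a suitable length $n$, then translate the tails coordinatewise modulo $2$; this preserves $\lambda$.
\end{itemize}

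Now take any $p\in\mathcal C_P$ and $l_1\in\mathcal C_L$. Both differ from $\id$, so there are $x$ with $p(x)\neq x$ and $z$ with $l_1(z)\neq z$. Put $y=p^{-1}(x)$, so $y\neq x$. Choose $b\in L$ with $b(z)=x$ and $b(l_1(z))=y$, and set $l=bl_1b^{-1}$. Then $l\in\mathcal C_L$, because $\mathcal C_L$ is a conjugacy class of $L$. Also $l(x)=bl_1(z)=y$, so $pl(x)=p(y)=x$. Hence $pl$ has a fixed point and $pl\notin\mathcal C_G$.
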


Thus the generic homeomorphism of the Cantor set is the product of a generic order preserving homeomorphism and a generic measure preserving homeomorphism, but the two factors have to be in a general position.

\begin{proof}
\ref{genA} Let $\varphi\colon P\times L\to PL$ be the multiplication map, which is a homeomorphism by Theorem \ref{thm:main}\ref{main3}. Since $PL$ is a dense $G_\delta$ subset of $G$, a subset of $PL$ is comeager in $PL$ if and only if it is comeager in $G$. Hence $\varphi^{-1}(\mathcal C_G\cap PL)$ is comeager in $P\times L$,
and so is $\mathcal C_P\times\mathcal C_L$. The image under $\varphi$ of the intersection of these two sets is then comeager as well.

\ref{genB} We need two auxiliary facts.
First, as follows by the description \cite{KupkaOprocha}, no element of $\mathcal C_G$ has a fixed point. Second, the group $L$ acts transitively on the set of pairs of distinct points of $E$.

Now let $p\in\mathcal C_P$ and $l_1\in\mathcal C_L$. Since $p\neq\id\neq l_1$, there are $x\in E$ with $x\neq p(x)$ and $z\in E$ with $l_1(z)\neq z$. Denote $y=p^{-1}(x)$ and find $b\in L$ satisfying $b(z)=x$ and $b(l_1(z))=y$. Then put $l=bl_1b^{-1}\in\mathcal C_L$. Then $l(x)=y$, so $pl(x)=x$ and $pl\notin\mathcal C_G$.
\end{proof}

\subsection{Other measures}
Any two linear orders on $E$ which induce the topology of $E$ are isomorphic. 
%(For such an order, the set of all gaps is a countable dense linear order without endpoints, and $E$ is recovered from it as the set of its cuts, with every gap contributing two points.)
Hence the group $P$ is determined uniquely up to conjugacy in $G$. For measures the situation is different: there are uncountably many full nonatomic measures on $E$ no two of which are homeomorphic \cite{Ak1}. It is therefore natural to ask what happens if $\lambda$ is replaced by another measure.
For a measure $\mu$ let $L_\mu=\{g\in G:g_*\mu=\mu\}$.

\begin{remark}\label{rem:anymeasure}
The equality $G=PL$ fails for every measure in place of $\lambda$: if $\mu$ is a full nonatomic measure on $E$, then $P\cap L_\mu=\{\id\}$, $PL_\mu\neq G$ and $L_\mu P\neq G$.

Indeed, Proposition \ref{prop:akin} shows, exactly as in Corollary \ref{cor:criterion}, that $P\cap L_\mu=\{\id\}$ and that $g\in PL_\mu$ if and only if $\St(g_*\mu)=\St(\mu)$. Fix $c\in\St(\mu)$ and let $Q_n=[0^{n-1}1]$ for $n\ge1$, so that $[0^{n-1}]$ is the disjoint union of $Q_n$ and $[0^n]$.
By Lemma \ref{lem:F}, the numbers $\mu(U)$, where $U$ is a nonempty clopen initial segment of the interval $Q_n$ and $U\neq Q_n$, are dense in $(0,\mu(Q_n))$. Hence we can choose inductively such sets $U_n\subseteq Q_n$ so that
$0<c-\sum_{i\le n}\mu(U_i)<\mu([0^n])$ for every $n$; then $\sum_n\mu(U_n)=c$. The sets $I=\{\zinf\}\cup\bigcup_nU_n$ and $I'=\{\zinf\}\cup\bigcup_n(Q_n\setminus U_n)$ are closed sets without isolated points, $I\cup I'=E$ and $I\cap I'=\{\zinf\}$.
The sets $E_{\xs}$ and $E'=\{x\in E:x\ge\xs\}$, where $\xs=0101\ldots$ is the point from Section \ref{sec:example}, have the same properties with $\xs$ in place of $\zinf$. Since any two Cantor sets with distinguished points are homeomorphic by a homeomorphism respecting the distinguished points,
there is $g\in G$ with $g(I)=E_{\xs}$ and $g(I')=E'$. Then $F_{g_*\mu}(\xs)=\mu(I)=c$, and since $\xs$ is not an endpoint of a gap, $c\notin\St(g_*\mu)$ by Lemma \ref{lem:F}\ref{FA}. Thus $g\notin PL_\mu$ and $g^{-1}\notin L_\mu P$.
\end{remark}

\begin{remark}\label{rem:goodmeasures}
On the other hand, the proofs of Corollary \ref{cor:criterion} and of Theorem \ref{thm:main}\ref{main1}, \ref{main3}, \ref{main4} used only two properties of $\lambda$: that $\lambda$ is good in the sense recalled at the beginning of Section \ref{sec:cosets}, and that $\St(\lambda)=S(\lambda)\setminus\{0,1\}$.
Hence these statements remain valid, with the same proofs, if $\lambda$ is replaced by any good measure $\mu$ on $E$ with $\St(\mu)=S(\mu)\setminus\{0,1\}$, the group $L$ by $L_\mu$, and $\D$ by $S(\mu)\setminus\{0,1\}$.
%(In the proof of the density of $U_d$, the number $r=d-\sigma_{i-1}$ belongs to $S(\mu)$ because $\mu$ is good and $\sigma_{i-1}<d$ are elements of $S(\mu)$; the set $C'$ then exists again because $\mu$ is good.)
%It is not hard to see, using Lemma \ref{lem:exist}, Akin's theorem and the fact that $S(\mu)+\Z$ is a subgroup of the reals for every good measure $\mu$ \cite[Proposition 2.4]{Ak2}, that every good measure on a Cantor space is homeomorphic to a measure on $E$ with this property.
\end{remark}

\begin{remark}\label{rem:DKMN}
Proposition \ref{prop:generic}\ref{genA} does not extend to all good measures, since $L_\mu$ needs not to have a comeager conjugacy class.
Doucha, Kwietniak, Malicki and Niemiec \cite[Theorem 5.7]{DKMN} proved that for a good measure $\mu$ with $S(\mu)\subseteq\mathbb Q$
the group $L_\mu$ has a comeager conjugacy class if and only if $S(\mu)+\Z$ is a subring of $\mathbb R$.
\end{remark}

%\subsection{Universal minimal flows}
%The group $P$ is extremely amenable; this follows from Pestov's theorem \cite{Pe}, see \cite[Theorem 1.2]{GW}. The universal minimal flow of $G$ is $\Phi$ \cite{GW}; in \cite[Section 8]{KPT} it is described as the space of all linear orderings of the algebra of clopen subsets of $E$ which are natural on finite subalgebras. Kechris, Soki\'c and Todorcevic \cite{KST} proved that the universal minimal flow of $L$ is the same space with the restricted action, provided that a certain homogeneous version of the dual Ramsey theorem holds; a proof of the latter was announced in \cite{Mi}.

%The action of $L$ on $\Phi$ is minimal. Indeed, the homeomorphisms used in the proof of \cite[Theorem 1.3(i)]{GW} to establish the minimality of the action of $G$ on $\Phi$ are of the form $wy\mapsto\tau(w)y$, where $\tau$ is a permutation of the set of words of a fixed length, and such homeomorphisms belong to $L$. Theorem \ref{thm:main} shows that nevertheless $L$ does not act transitively on the orbit $Gc_0$: under the bijection $\beta$ of Proposition \ref{prop:22}, the set $Lc_0$ corresponds to $\pi(LP)\subsetneq G/P$.

\subsection{A question}
By Theorem \ref{thm:main}, the groups $P$ and $L$ are not complements of each other in $G$, although $L\cap P=\{\id\}$, and by Remark \ref{rem:anymeasure} the same is true if $L$ is replaced by the stabilizer of any full nonatomic measure.

\begin{question}\label{q:complement}
Does $P$ admit a closed complement in $G$, that is, is there a closed subgroup $K\le G$ with $K\cap P=\{\id\}$ and $G=KP$? Does $L$ admit a closed complement in $G$?
\end{question}

\section*{Acknowledgements}
The author used an AI assistant during the preparation of this paper. It was used to search the literature, to draft and edit parts of the text and the figure, to run computer searches for small generating pairs of Thompson's group $V$ which led to the choice of the generators in Theorem \ref{thm:gen}, and to propose and check arguments, in particular in Sections \ref{sec:gen} and \ref{sec:remarks}.
All statements and proofs were verified by the author, who takes full responsibility for the content.

\bibliographystyle{abbrv}
\bibliography{citace}

\end{document}